\def\N{\mathbb N}
\def\Z{\mathbb Z}
\def\R{\mathbb R}
\newtheorem{theorem}{Theorem}[section]
\newtheorem{lemma}[theorem]{Lemma}  
\newtheorem{proposition}[theorem]{Proposition}
\newtheorem{corollary}[theorem]{Corollary} 
\newtheorem{remark}[theorem]{Remark} 
\numberwithin{equation}{section}
\title{On the instability tongues of  the Hill equation\\  coupled with a conservative nonlinear oscillator}
\author {Clelia  Marchionna, Stefano Panizzi}
 \date{Dipartimento di Matematica del Politecnico, Piazza Leonardo da Vinci 32, 20133
Milano, ITALY\\ Dipartimento di Scienze
Matematiche, Fisiche  e Informatiche, Parco Area delle Scienze 53/A, 43126 Parma, ITALY}
\begin{document}

\maketitle

\begin{abstract}  We study the asymptotics for the lengths $L_N(q)$  of the  instability tongues of  Hill equations that  arise  as iso-energetic linearization of   two coupled oscillators   around  a single-mode periodic orbit. We show that for small energies,  {\em i.e.} $q\rightarrow 0$, 
the instability tongues have the same behavior that occurs  in the case of  the Mathieu equation: $L_N(q) = O(q^N)$. 
The result  follows from a theorem which fully characterizes the class of Hill equations   with the same asymptotic behavior. 
In addition, in some significant cases we characterize the shape of the  instability tongues for small energies. 
Motivation  of the paper stems from
recent mathematical works on the theory of suspension bridges.
 \end{abstract}

\noindent {\em Keywords}: Hill equation, Mathieu equation, instability tongues, coupled oscillators, coexistence
\

\noindent {\em Mathematics Subject Classification}:   Primary:    34B30; Secondary: 37C75, 34C15 \vspace{0.2cm} \\

\section{Introduction}
We consider a class of  parameterized Hill equations of the following type,
\begin{equation}
z''(t)      +     (\beta + g(u(t,q))) z(t) =0,  \label{H2}
\end{equation}
in which   $\beta$ represents  the spectral  parameter, and the periodic coefficient  depends (through the real analytic  function g)    on the solution $u=u(t,q)$ of an  initial-value problem for a nonlinear conservative second order  differential equation,
 \begin{equation}
u''(t)  +   4 u(t) +  f(u(t))=0 ,  \qquad u(0)=q, \quad u'(0) =0.    \label{duff1}
\end{equation}

In \ref{duff1}, 
 $q$ is a real parameter, and the function $f$ is  assumed to be real analytic in a neighborhood of $0$, with  $f(x) = O(x^2)$, $x\rightarrow 0$.     Under this assumption, if $q$  is sufficiently small,
 the  solution $u(t,q)$  is periodic with  period $T(q)$. We shall  refer to the period of the Hill equation \ref{H2} as $T(q)$, although in some cases  
 the fundamental period of $g(u(t,q))$ could be a fraction of  $T(q)$.\footnote{If  $f$ and $g$ are odd and even functions respectively, the period of $g(u(t,q))$ is indeed $T(q)/2$. It is not possible to exclude lower periods for exceptional values of $q$. } 
 
We are  interested in certain asymptotic properties of the   instability region of equation \ref{H2}, which is 
 the set of pairs of parameters $(q,\beta)$ such that all solutions of \ref{H2} are unbounded. 
According to the basic  theory of the   Hill  equation  \cite{MW}[ch. II, Th. 2.1],  \cite{E}, [ch. 2, Th. 2.3.1] for  any admissible fixed value of $q$,
the instability  set  in the $\beta$-axis  is the union of an unbounded interval $(-\infty, \beta^+_0(q))$ with a countable family of, possibly empty,
  open intervals  $I_N$, $N=1,2,\dots$, whose  endpoints  $\beta^{\pm}_N(q)$    are the   $T(q)$-periodic eigenvalues  
for  even $N$, or the $T(q)$-anti-periodic eigenvalues for odd $N$.  
When  $\beta$ lies  in the interior of the complementary set  all solutions are bounded.    As functions of $q$, the curves $\beta = \beta^{\pm}_N(q)$   form in the plane $(q, \beta)$ the boundaries of the so-called \emph{instability tongues} (resonance tongues, Arnold's tongues)  of the  Hill  equation. These tongues stem and bifurcate from
a sequence of points on the $\beta$-axis corresponding to the double eigenvalues $\beta^+_N(0)= \beta^-_N(0)=N^2$.
Our main concern is  the asymptotic behavior of $\beta^{\pm}_N(q)$   as $q\rightarrow 0$.
We consider  two types of problems:
\begin{itemize}
  \item[(I)]  The order of tangency of $\beta^{\pm}_N(q)$   as $q\rightarrow 0$, that is the decay rate to zero of the signed  length of the instability tongues  $  L_N(q) =  \beta^+_N(q) - \beta^-_N(q)   $.
  \item[(II)] The shape of the instability tongues for small values  of $q$. We shall distinguish between ``trumpet shaped" tongues, containing a   segment of  the horizontal line $\beta = \beta_N(0)$, and ``horn shaped" ones, whose intersection with the  horizontal line $\beta = \beta_N(0)$ is empty for small $q$  (see Fig. \ref{Fig2} in Section 4). 
\end{itemize}

We postpone motivations  and  results on problem (II) to  Section \ref{additional}.
Problem (I)  is  classical  in the standard theory of the Hill equation with two parameters. For instance, if we  set $f(u)\equiv 0$ in \ref{duff1}
and  $g(u)=u$,  equation  \ref{H2} reduces to the   Mathieu  equation $z'' +(\beta + q \cos(2t)) z=0$, for which the asymptotic length    is known to be
$  L_N(q) = C_N q^N + O(q^{N+1})$,
with precise determination of the coefficient $C_N\neq 0$,   see  \cite{Hale,LK}.   For the standard two-parameters Hill equation,
\begin{equation}
\label{standard}
z'' +(\beta + q \phi(t)) z=0,
\end{equation}
where $\phi$ is  a general $L^2$ and $\pi$-periodic  function,  a  classical result of Erd\'elyi \cite{Er} states that  no better estimate than $L_N(q) = O(q)$ can be expected.  In the case when $\phi(t)$ is a trigonometric polynomial of the form
$$ \phi(t) = \sum_{j=1}^s a_j \cos(2jt),$$
Levy and Keller \cite{LK} (see also  \cite{A} for a different approach)    proved   that the length of the $N$-th resonance interval is at most $C_N q^r$,  where $r$ is the integer part of $N/s$, and presented explicit formulas for $C_N$ when $N$ is a multiple of $s$   (see  also  \cite{H},  and   \cite{Vol} for interesting  extensions to a generalized Ince equation). For the similar, and partly related, problem of the asymptotics   of $L_N$  as $N\rightarrow \infty$,    we refer to    \cite{AS,AD}. 

 In this paper we prove  the following theorem which shows  that, for every  equation \ref{H2}  coupled with \ref{duff1},  the instability tongues    have at least the same order of tangency   of the Mathieu equation, that is $L_N(q) = O(q^N)$   as $q\rightarrow 0$.
\begin{theorem} \label{duffhill}
 Assume that  the functions $f$, $g$   are real analytic  in a  neighborhood of the origin,  with $f(x) = O(x^2)$ as $x\rightarrow 0$. Then, for every  $\; N\in \N$,
there exists a  (possibly vanishing)  constant $C_N$, such that

 \

 \noindent $(A)$  $\qquad \qquad L_N(q) = C_N q^N + O(q^{N+1})\quad $     as $q\rightarrow 0$ .
\end{theorem}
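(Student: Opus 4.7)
The plan is to apply degenerate analytic perturbation theory in a Fourier basis, after first rescaling time via a Lindstedt--Poincar\'e substitution so that the Hill equation acquires a fixed period and a structured power series in $q$ for its potential. The crucial structural property, analogous to the Levy--Keller hypothesis, will be that the $q^k$-coefficient of the rescaled potential is a trigonometric polynomial of degree $\leq k$; this forces the Fourier matrix element driving the $N$-th splitting to vanish to order $q^N$.

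First I would rescale time in \ref{duff1}. Since $f(u) = O(u^2)$, its solution is periodic for small $q$ with analytic period $T(q) \to \pi$. Set $\omega(q) = 2\pi/T(q)$ and $\theta = \omega(q) t/2$, so that $U(\theta, q) := u(2\theta/\omega(q), q)$ is $\pi$-periodic in $\theta$ for every $q$. Simultaneously expanding
\[
U(\theta, q) = q\cos(2\theta) + \sum_{k \geq 2} q^k U_k(\theta), \qquad \omega^2(q) = 4 + \sum_{k \geq 2} q^k \omega_k^{*},
\]
and solving order by order (the Fredholm alternative at each order fixing $\omega_k^{*}$ to annihilate the resonant mode), one verifies inductively that each $U_k$ is an even trigonometric polynomial in $\theta$ with Fourier support contained in $\{e^{2im\theta} : |m| \leq k\}$. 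Assuming without loss of generality that $g(0) = 0$ (a constant being absorbed into $\beta$), the same substitution transforms \ref{H2} into
\[
z''(\theta) + (\tilde{\beta} + V(\theta, q))\, z(\theta) = 0, \qquad \tilde{\beta} = \frac{4\beta}{\omega^2(q)}, \qquad V(\theta, q) = \frac{4}{\omega^2(q)}\, g(U(\theta, q)).
\]
Analyticity of $g$ combined with the structure of $U$ yields $V(\theta, q) = \sum_{k \geq 1} q^k V_k(\theta)$, each $V_k$ an even trigonometric polynomial of degree $\leq k$; equivalently, the Fourier coefficients in the basis $\{e^{2im\theta}\}$ satisfy $\hat{V}_m(q) = O(q^{|m|})$.

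Next, the endpoints $\beta_N^{\pm}(q)$ are identified with the two eigenvalues of the self-adjoint Hill operator $\mathcal{H}(q) = -\partial_\theta^2 - V(\theta, q)$ bifurcating from the doubly degenerate eigenvalue $\tilde{\beta} = N^2$ at $q = 0$ (with $\pi$-periodic boundary conditions for even $N$, antiperiodic for odd $N$), the eigenvectors at $q = 0$ being $e^{\pm iN\theta}$ in both cases. By Rellich's theorem for analytic families of self-adjoint operators, $\tilde{\beta}_N^{\pm}(q)$ are analytic in $q$ near $0$. The involution $\theta \to -\theta$, inherited from $u(-t, q) = u(t, q)$ and hence $V(-\theta, q) = V(\theta, q)$, equalizes the diagonal entries of the effective $2 \times 2$ Hamiltonian produced by L\"owdin partitioning on the degenerate subspace, so the splitting equals $2|h_{+-}(q)|$, twice the modulus of the off-diagonal entry. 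That off-diagonal entry expands as a sum over Fourier ``paths'' from $e^{-iN\theta}$ to $e^{iN\theta}$ through non-degenerate intermediate states: a path using coefficients $\hat{V}_{k_1, m_1}, \ldots, \hat{V}_{k_r, m_r}$ has total order $q^{\sum k_i}$ and realizes the index shift $2\sum m_i = 2N$ subject to $|m_i| \leq k_i$. Therefore
\[
N = \Bigl|\sum_i m_i\Bigr| \leq \sum_i |m_i| \leq \sum_i k_i,
\]
so $h_{+-}(q) = O(q^N)$ and hence $\tilde{\beta}_N^+(q) - \tilde{\beta}_N^-(q) = O(q^N)$. Since $\beta = \tilde{\beta}\cdot \omega^2(q)/4 = \tilde{\beta}\,(1 + O(q^2))$, the same order holds for $L_N(q)$; combined with analyticity of $\beta_N^{\pm}$ in $q$, this gives $L_N(q) = C_N q^N + O(q^{N+1})$, with $C_N$ (possibly zero) the coefficient of $q^N$ in the Taylor expansion of $\beta_N^+(q) - \beta_N^-(q)$.

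The main obstacle is the rescaling itself: a naive substitution $\tau = \pi t/T(q)$ applied only to the time variable in the expansion of $u$ generates secular terms of the form $t\sin(2t)$ that destroy the trigonometric-polynomial structure on which the whole argument rests. The Lindstedt--Poincar\'e device of expanding $\omega^2(q)$ simultaneously is precisely what removes these secular resonances at every order and preserves the degree bound on each $V_k$. Once this is in hand, the rest is a clean degenerate perturbation computation, the Fourier-path combinatorics running in close parallel with the Levy--Keller proof for trigonometric-polynomial potentials.
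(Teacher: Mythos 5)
Your proposal is correct and follows essentially the same route as the paper: a Lindstedt--Poincar\'e rescaling yielding a $\pi$-periodic potential whose $q^k$-coefficient is an even trigonometric polynomial with Fourier support in $\{e^{2im\theta}:|m|\le k\}$, followed by degenerate perturbation theory in the Fourier basis where the index-shift inequality $N=\lvert\sum_i m_i\rvert\le\sum_i k_i$ forces the splitting to be $O(q^N)$ --- this is exactly the paper's Proposition on the $u_n$, Lemma on the $G_n$, and the support-cone Lemmas adapted from Levy--Keller. The only cosmetic difference is that you package the degenerate step as a L\"owdin $2\times2$ effective Hamiltonian with a path expansion of the off-diagonal entry, whereas the paper splits $L^2$ into even and odd subspaces so each branch is a simple eigenvalue and then tracks domains of dependence of the Fourier coefficients; the underlying combinatorics are identical.
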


It is not  a simple task   to compute the coefficient $C_N$, but  we shall provide a recursive formula in Appendix A   showing that $C_N$ is a  polynomial  of degree $N$ in  the derivatives of  $f$ and $g$ up to order $N$. We are unable to provide a uniform bound  on the rest $L_N(q) - C_N q^N$ in terms of    $f$, $g$ and $N$.

We stress the fact that $C_N$ is possibly vanishing because the coupled system  \ref{H2}--\ref{duff1}    includes   the classical
Lam\'e equation\footnote{We refer here to the {\em Weierstrassian form} of the Lam\'e equation (see \cite[ch. XV, sect. 15.2]{EMOT} ) :
$$ z'' + (\lambda - m(m+1) \mathcal{P}(t)) z =0,$$
where $\mathcal{P}$  is a suitable translation of a Weierstrass elliptic function.}
 corresponding, in our notations,  to   $f(u) =-  6\, u^2$, and $g(u) = - m(m+1) u$, $m\in \N$.   In this case,  Ince \cite{In} in 1940 showed that    only finitely many, precisely $m$,  instability  intervals (thus tongues) fail to vanish. Equivalently, for all but $2m+1$ eigenvalues, there exist two linearly independent periodic eigenfunctions (coexistence). We shall briefly discuss this subject in Section   \ref{coesistenza}  and Appendix B.

In order to  prove Theorem \ref{duffhill}, we need to  rescale  the time  variable and the spectral parameter  so that  equation \ref{H2} reduces  to a Hill equation whose periodic coefficient has fixed period $\pi$  and  depends analytically on the parameter $q$:
\begin{equation}
 \label{Hill1} z'' + (\lambda +  G(t,q)) z = 0.
 \end{equation}

Once this is done, the theorem is a consequence of the following  characterization of the  periodic coefficients $G(t,q)$   in  \ref{Hill1}  for which the asymptotic relation $(A)$ holds true.
\begin{theorem} \label{hilldirect}
Assume that $G(t,q)$ is an even $\pi$-periodic function, depending analytically on the parameter $q$ in a neighborhood of $0$.
Then the  lengths of the instability tongues of  equation \ref{Hill1} satisfy  the asymptotic estimate $(A)$,  if and only if $\,G(t,q)$ admits the following power expansion,
\begin{equation}
\label{exp}
G(t,q) = \sum_{n=1}^\infty G_n(t) q^n,
 \end{equation}
 in which the time coefficients are  trigonometric polynomials of degree $2n$; that is,
\begin{equation}
\label{exp1}  G_n(t) =\sum_{k=0}^n G_{k,n} \cos(2kt), \qquad G_{k,n} \in \R.
\end{equation}
\end{theorem}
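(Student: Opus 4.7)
The plan is to reduce the theorem to a combinatorial path-counting in the Fourier chain. Because $G$ is even, $\pi$-periodic, and analytic in $q$, write
\begin{equation*}
G(t,q) = \sum_{n\ge 1}\sum_{k\ge 0} g_{k,n}\cos(2kt)\,q^n,
\end{equation*}
so the statement to prove becomes: $L_N(q) = O(q^N)$ for every $N$ if and only if $g_{k,n} = 0$ whenever $k > n$. In the Fourier basis $\{e^{ikt}\}$ of the relevant periodic or antiperiodic subspace, the Hill operator $H_q = -\partial_t^2 - G(t,q)$ is a band-like operator with diagonal $k^2$ and off-diagonal couplings proportional to $g_{|k-k'|/2}(q)$. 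The unperturbed eigenvalue $N^2$ is doubly degenerate with eigenvectors $e^{\pm iNt}$, and a Feshbach--Schur reduction delivers a $2\times 2$ effective matrix. The joint symmetry $t\mapsto -t$ together with complex conjugation exchanges $e^{iNt}$ and $e^{-iNt}$ while leaving $H_q$ invariant, so the two diagonal entries of that matrix coincide and the off-diagonal entry $B_N(q)$ is real; hence $L_N(q) = 2|B_N(q)|$ up to higher-order self-consistency corrections.

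For sufficiency, expand $B_N(q)$ as a formal power series in $q$. Each contribution of order $q^n$ comes from a path $-N \to k_1 \to \cdots \to k_{r-1} \to +N$ in the Fourier chain, where step $j$ uses a mode $\cos(2m_j t)$ drawn from some $G_{n_j}$ (a factor of $g_{m_j,n_j}$) and is separated from the next step by an energy denominator $(N^2 - k_j^2)^{-1}$, which is bounded since $|N^2 - k_j^2| \ge 2N-1$ for integer $k_j \ne \pm N$. The total $q$-order is $\sum_j n_j = n$, and the net Fourier displacement is $\sum_j \epsilon_j m_j = N$ for suitable signs $\epsilon_j \in \{\pm 1\}$. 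Under the hypothesis $m_j \le n_j$ whenever $g_{m_j,n_j}\ne 0$, the triangle inequality yields
\begin{equation*}
N \;=\; \Bigl|\sum_j \epsilon_j m_j\Bigr| \;\le\; \sum_j m_j \;\le\; \sum_j n_j \;=\; n,
\end{equation*}
so no contribution appears below order $q^N$; thus $B_N(q) = O(q^N)$ and $L_N(q) = O(q^N)$.

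For necessity, suppose the characterization fails and choose the minimal $n^{*}$ such that $g_{k,n^{*}} \ne 0$ for some $k > n^{*}$; for this $n^{*}$, pick the maximal such $k$ and call it $K$. By minimality of $n^{*}$, every $G_{n'}$ with $n' < n^{*}$ is a trigonometric polynomial of degree at most $2n'$. In the path expansion of $B_K(q)$ at order $q^{n^{*}}$, the unique single-step path $-K \to +K$ that contributes uses $g_{K,n^{*}}\cos(2Kt)$ and produces the nonzero term $c\,g_{K,n^{*}}q^{n^{*}}$ (no energy denominator is present in a one-step path, hence no cancellation from that source). Any multi-step path at this order necessarily has all its $n_j < n^{*}$, and the same triangle inequality forces $K \le n^{*}$, contradicting $K > n^{*}$. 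Thus $B_K(q) = c\,g_{K,n^{*}} q^{n^{*}} + o(q^{n^{*}})$, giving $L_K(q)$ of order exactly $q^{n^{*}}$ with $n^{*} < K$, which contradicts $L_K(q) = O(q^K)$.

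The main obstacle is promoting this formal path calculus to a rigorous asymptotic statement at order $q^N$. One must justify the Feshbach--Schur reduction in the infinite-dimensional Fourier space, use the $q$-analyticity of $G$ to control the tail of the resolvent series and produce a genuine $O(q^{N+1})$ remainder on $L_N(q) - 2|B_N(q)|$, and verify that the self-consistency equation $H^{\mathrm{eff}}(\lambda)v = \lambda v$, rather than naive evaluation at $\lambda = N^2$, does not disturb the leading order of the splitting. The uniform lower bound $|N^2 - k^2| \ge 2N-1$ for integer $k \ne \pm N$ makes this analytic scaffolding routine, and once it is in place the combinatorial path argument delivers both directions of the equivalence.
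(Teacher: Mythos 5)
Your argument is correct in substance, and its combinatorial heart --- a path from $e^{-iNt}$ to $e^{iNt}$ whose net frequency displacement $N$ is bounded via $N=\bigl|\sum_j \epsilon_j m_j\bigr|\le \sum_j m_j\le \sum_j n_j=n$ --- is exactly the mechanism the paper exploits, but you package it differently. The paper never forms a $2\times 2$ effective matrix: it splits $L^2([-\pi,\pi])$ into even and odd subspaces, on each of which the eigenvalue $N^2$ is simple, so the Rellich--Kato theorem gives two honestly convergent power series $\lambda_N^{\pm}(q)$ whose Fourier coefficients $z^{\pm}_{k,n}$ satisfy an explicit recursion; your triangle inequality then appears as a pair of ``finite propagation speed'' lemmas (supports confined to forward cones, domains of dependence confined to backward cones), and the conclusion $\Lambda_n^+=\Lambda_n^-$ for $n\le N-1$ is read off because the dependence cone of $\Lambda_n^{\pm}$ lies in the region where the even and odd coefficients coincide. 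This buys the paper exactly what you flag as your ``main obstacle'': there is no $\lambda$-dependent Feshbach map, no self-consistency equation, and the $O(q^{N+1})$ remainder comes for free from analyticity of each branch, whereas in your scheme the reduction of $L_N(q)$ to $2|B_N(q)|$ and the control of the expanded energy denominators genuinely have to be carried out (they are standard given $|N^2-k^2|\ge 2N-1$, but they are precisely the part you have not written). Your necessity argument is sound and is a minimal-counterexample version of the paper's induction on the level $m$; note only that ``the maximal $k$ with $g_{k,n^*}\neq 0$'' need not exist when $G_{n^*}$ fails to be a trigonometric polynomial --- but maximality is never used, since for \emph{any} $K>n^*$ with $g_{K,n^*}\neq 0$ the one-step path is the sole order-$q^{n^*}$ contribution to $B_K$, which is all you need.
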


In  Theorem \ref{hilldirect} we emphasize the inverse result that, as far as we know,  is new even in the standard case $G(t,q) =q\phi(t)$, 
when it simply states that if  the  instability tongues of   \ref{standard} satisfy   (A), then either $\phi \equiv 0$ or \ref{standard}
is the Mathieu equation. For this reason we take the liberty of naming  \emph{generalized  Mathieu  equation},  any Hill equation   whose periodic coefficient   admits  an expansion such as \ref{exp}--\ref{exp1}.

A Hill  equation such as \ref{H2} arises quite naturally in physical applications as the variational equation of periodic solutions in Hamiltonian systems with two degrees of freedom.
A typical example is provided by a two-mode conservative system of oscillators  that, for a given regular potential energy function $\Psi$, writes  as follows,
 \begin{align}
\label{H1}
    & u''(t)   +    \frac{\partial}{\partial u} \Psi(u(t),z(t))=0 ,  \\[1 ex]
\label{4}     &z''(t)     +      \frac{\partial}{\partial z} \Psi(u(t),z(t))=0.
\end{align}

If we assume the existence of  a periodic single-mode motion, \emph{i.e.} a periodic  solution  of \ref{H1}--\ref{4} in which one  component, say $u$,  is periodic and the other vanishes, 
 the active mode $u=u(t,q)$ can be seen as parameterized by its initial value $u(0)=q$ in the following way, 
  $$
 u''(t) +   \frac{\partial}{\partial u} \Psi(u(t),0)=0, \qquad u(0) = q, \; u'(0)=0.
 $$ 

The linearization at a fixed energy level (iso-energetic linearization) of the system \ref{H1}--\ref{4} around the periodic orbit
$(u(\cdot,q),0)$ yields  the Hill equation,
 $$
           z''(t)     +      \frac{\partial^2}{\partial z^2} \Psi(u(t),0) \, z(t) =0,
 $$ 
whose analysis, according to Floquet's theory, determines the linearized stability or instability of the single-mode periodic motion. 
Thus the   results in this paper are relevant for   the parametric stability/instability  analysis of the system \ref{H1}--\ref{4} in the case  when  the energy of the coupled oscillators system is small.    Here we consider
 $\beta = \frac{\partial^2}{\partial z^2} \Psi(0,0)$ as a parameter,  $\frac{\partial^2}{\partial u^2} \Psi(0,0)=4$ (possibly after a   suitable rescaling of time),
  $\frac{\partial}{\partial u} \Psi(u(t),0) = 4u + f(u) $,  $ \frac{\partial^2}{\partial z^2} \Psi(u,0) = \beta + g(u)$.


  The main motivation for starting the study of problems (I) and (II) is the analysis of   parametric torsional instability for some recent suspension bridge models, where a  finite dimensional projection of the phase space  reduces the stability analysis at small energies of the model to the stability of a Hill equation such as \ref{H2}. We refer the reader to Gazzola's book  \cite{Gaz},  to the  papers  \cite{BG,BGZ,AG,CAG,F}, and to our previous works  \cite{MP,MP2}. Other interesting applications  arise in  the study of the stability of nonlinear modes in some beam equations \cite{GG}  or string equations \cite{Dic,CW}.   In the latter case, we must observe that the eigenvalue problem takes a different form:  $ \,  z'' + \beta(u+g(u)) z =0 $.
Our results, in particular Theorem \ref{duffhill}, extend to this form as well but in order to avoid redundancy  of quite similar reasonings we do not include the proof.

The plan of the paper is the following:  In Section 2, after introducing the problem in the context of analytic perturbation theory, we prove Theorem \ref{hilldirect}. The direct part is an adaptation of the argument in \cite{LK}, whereas the converse makes use of  a new inductive argument. In Section 3 we deal with our main result    (Theorem \ref{duffhill})  whose proof is, after rescaling, merely a verification of the assumptions of Theorem \ref{hilldirect}; in addition to a few complementary results   we briefly recall the issue of the existence of finitely many tongues (coexistence).   In Section 4 we discuss the shape of the instability tongues depending on  the first coefficients in the expansions of $f$ and $g$. Some examples 
 that are relevant to the theory of suspended bridges are examined in Section 5, and some situations are shown in which   only finitely many    tongues do not vanish; some are well-known while others are novel. 
 
We include two appendices: Appendix A describes a recursive formula for the computation of $C_N$; Appendix B elaborates on a few transformations of the Lam\'e equation  relevant for this work.

\section{The generalized Mathieu Equation}
\label{Mathieu} 

In the first part of this  section we consider the Hill equation \ref{Hill1}, and    the \emph{if} part of Theorem \ref{hilldirect}. The inverse result
  will be proved in the second part of this section. The proof of the direct result is a variation and a simplification  of an argument in  \cite{LK}.
The inverse proof uses a new, although simple, inductive procedure. Before proceeding with the proofs, we point out some general issues on the analytic perturbation problem
we are addressing.

The periodic eigenvalue problem for the Hill equation \ref{Hill1} is   a regular perturbation problem and may be cast in Kato's abstract
framework \cite{K}. We assume that $G(\cdot,q)$ is $\pi$-periodic as a function  of $t$, and  is analytic in a neighborhood of $q=0$  as a function of $q$, with values in $L^\infty([0,\pi])$, i.e.
\begin{equation}
\label{generalG}  G(t,q) = \sum_{n=1}^\infty q^n G_n(t), \qquad \limsup_{n\rightarrow \infty} \|G_n\|_\infty^{1/n}< \infty.
\end{equation} %

To avoid distinction among  periodic (even eigenvalue numbers) and anti-periodic (odd eigenvalue numbers) eigenfunctions, we assume as reference space the Hilbert space $H=L^2([-\pi,\pi])$, in which we consider the family of self-adjoint operators with discrete spectrum,
$$ A(q) = -\frac{{\rm d}^2}{{\rm d} t^2} - \sum_{n=1}^\infty q^n G_n,$$
with boundary conditions $z(-\pi)= z(\pi)$, $z'(-\pi) = z'(\pi)$.
The Hilbert space $H$ may be  decomposed according to
$ H = H^+ \oplus H^- $, where $H^{\pm}$ denotes the subspace of even ($+$) functions, and odd ($-$) functions, that is
$$ H^+ = \mathop{\rm span}\{\cos kt : \, k\geq 0 \}, \quad H^- = \mathop{\rm span}\{ \sin kt: \, k\geq 1\}. $$

Consequently, with obvious notation,  we have $A(q) = A(q)^+ \oplus A(q)^- $, so that
 the doubly degenerate eigenvalues $\lambda_N(0) =N^2$ turn out to be simple in $H^{\pm}$. Owing to the Rellich--Kato perturbation theorem (see e.g. \cite{V}), every perturbed  eigenvalue $ \lambda^{\pm}_N (q)$ in $H^{\pm}$ depends analytically on $q$.  We shall write the power series
\begin{equation}
\label{eigexp} \lambda^{\pm}_N (q) = N^2 + \sum_{n=1}^\infty \Lambda_n^{\pm}(N) \,q^n,
\end{equation}
whose convergence radius $r_N$ can be estimated by Kato's  resolvent method: a lower bound for $r_N$ is given by the solution of the following equation (see \cite[ch. II, \S 3]{K}),
$$ \sum_{n=1}^\infty r_N^n  \|G_n\|_\infty = d_N/2, $$
where $d_N$ is the isolation distance\footnote{The isolation distance is the distance of $\lambda_N$ from the the rest of the spectrum. It can be raised by the additional decomposition of $H$ into  periodic and anti-periodic functions, see \cite[ch. VII, \S 3]{K}.   } of $\lambda_N(0)= \lambda_N^{\pm}(0)$, i.e. $d_N = N^2 -(N-1)^2$.

From now on  in this section,   to avoid proliferation of indices, we  omit
the dependence on the eigenvalue number $N$, which we consider as fixed.   We denote by $Z^{\pm}(t,q)$ the  even ($+$) and odd  ($-$) normalized (see below \ref{constraint}) eigenfunction  corresponding to $\lambda^{\pm}_N$,  whose  power series
expansion  is  given by
\begin{equation}
\label{ps} Z^{\pm}(t,q) = \sum_{n=0}^\infty q^n z^{\pm}_n(t).
\end{equation}

If we plug   the power series  expansions \ref{eigexp}, \ref{ps}   into the equation \ref{Hill1}, we get the following recursive sequence of differential equations,
\begin{eqnarray}  \label{z0} && z_0^{\prime \prime}  + N^2 z_0  =0,\\
\label{zn}  && z_n^{\prime \prime} + N^2  z_n+   \sum_{s=1}^n \Lambda_s z_{n-s}+ \sum_{s=1}^n G_s(t) z_{n-s} =0 \qquad n\geq 1.
\end{eqnarray}

The $2\pi$-periodic solutions to \ref{z0}--\ref{zn} are not  unique, unless   we assume an   additional constraint,  such as the following,
\begin{equation}
\label{constraint}
\frac{1}{2\pi} \int_{-\pi}^\pi Z^{+}(t,q) \cos(Nt)  {\rm d}t = \frac{1}{2\pi i} \int_{-\pi}^\pi Z^{-}(t,q) \sin(Nt)  {\rm d}t =1.
\end{equation}

\subsection{Proof of Theorem \ref{hilldirect}:  Direct problem}  

Here we assume that all  coefficients $G_n$ are even $\pi$-periodic trigonometric polynomials of degree $2n$ such as in \ref{exp1}, and prove the property (A).   The proof is divided into two steps: first we consider the Fourier expansion
of
each $z^{\pm}_n$, and write down recursive formulas for $\Lambda_n^\pm$,  $z^{\pm}_n$; the rest of the proof relies mainly  on a finite propagation speed of disturbances property of the  system \ref{ric}--\ref{Bj},  which can be expressed either by the law of enlargement of supports or by the dual concept of domain of dependence, and is contained in three Lemmas; the last one, Lemma \ref{L3}, shows  that for $N\geq 1$ the order of tangency of $\lambda^{\pm}_N(q)$ at $q=0$ is at least $N-1$, that is $\Lambda_n^+(N)= \Lambda_n^-(N)$ in the expansion \ref{eigexp}, for $n\leq N-1$. Of course this is equivalent to the asymptotic estimate (A) with $C_N=  \Lambda_N^+(N)-\Lambda_N^-(N) $.

The  Fourier expansion of
each $z^{\pm}_n$ is:
\begin{equation}
\label{fourier}  z_n^{\pm}(t) = \sum_{k=-\infty}^\infty z_{k,n}^{\pm}e^{ikt}, \qquad z_{-k,n}^{\pm} = \pm z_{k,n}^{\pm},
\end{equation}
 where the first component of  the pair of indices
$(k,n) \in \Z\times\N$    refers to frequency, the latter  to the power of $q$,
We note that, owing to   \ref{ps}, \ref{constraint}, and \ref{fourier},  we get the initial conditions at level $n=0$,
\begin{equation}
\label{incond}  z_{ k,0}^{\pm} = \delta_{k,N} \pm \delta_{k,-N},
\end{equation}
and the fact that the $N$-th Fourier coefficient   of $z_n$ is zero for $n\geq1$, that is
 \begin{equation}  \label{keqN}
 z_{\pm N,n}^{\pm} =0, \qquad n\geq 1.
 \end{equation}

By substituting \ref{fourier} in \ref{zn}, we obtain the following recursive system for $z^{\pm}_{k,n}$, and $\Lambda_n^{\pm}$ \footnote{The same tecnique applies also for $N=0$, in order to compute $\lambda^+_0 (q) = \sum_{n=1}^\infty \Lambda_n^+(0) \,q^n$,  the upper bound of the 0-th unbounded interval of instability. The formulas \ref{ric},  \ref{Bj} are also true, providing to start with $z^+_{k,0}=\delta_{k,0}$, accordingly to \ref{constraint}.},
\begin{eqnarray}
\label{ric}
(N^2 - k^2) z_{k,n} &=&  - \frac{1}{2} \sum_{s=1}^n \sum_{i=0}^s G_{i,s} ( z_{k-2i,n-s} + z_{k+2i,n-s} ) - \sum_{s=1}^n \Lambda_s z_{k,n-s} ,\\
\label{Bj} \Lambda_n &=&  - \frac{1}{2} \sum_{s=1}^n \sum_{i=0}^s G_{i,s} \left( z_{N-2i,n-s} + z_{N+2i,n-s} \right).
\end{eqnarray}

The  second equation \ref{Bj} is obtained   either by taking the scalar product  of \ref{zn} with $e^{iNt}$ or    by setting $k=N$ in \ref{ric}. We note that  the symmetry relations $z^{\pm}_{k,n}= \pm z_{k,n}^{\pm}$ are satisfied, since the system \ref{ric}--\ref{Bj} is invariant under the transformation $k \mapsto -k$, and in the same way, one could get  an equation equivalent to \ref{Bj} by setting
$k=-N$ in \ref{ric}.

As in \cite{LK}, we  need  the following lemmas on the vanishing coefficients of system \ref{ric}--\ref{Bj}.
\begin{lemma}
\label{L1}
The frequency index $k$  of non vanishing coefficients must have  the same parity of $N$, that is
$  z_{k,n}^{\pm} =0$ for odd  $ k-N $.
The indices of non vanishing coefficients are contained in the union of  two forward cones:
$$ S_N =\{(k,n) \in \Z\times\N: \, |k-N| \leq 2n\} \cup \{(k,n) \in \Z\times\N: \,  |k+N| \leq 2n\}. $$
that is $z_{k,n}^{\pm} =0$, if $(k,n)$ belongs to the complementary set of $S_N$.

\end{lemma}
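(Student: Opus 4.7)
The plan is to prove both the parity assertion and the cone containment assertion simultaneously by induction on $n$, exploiting the recursion \ref{ric} together with the boundary conditions \ref{incond} and \ref{keqN}. The base case $n=0$ is immediate from \ref{incond}: $z_{k,0}^\pm$ is supported in $\{-N,N\}$, which consists of indices of the correct parity and lies trivially in $S_N$ (the apex of both cones).

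For the inductive step, assume the claim holds for all levels $m < n$. First consider parity: every term on the right-hand side of \ref{ric} involves $z_{k\pm 2i, n-s}$ or $z_{k,n-s}$, whose first index differs from $k$ by an even integer. Hence if $k-N$ is odd, so is the first index of every $z$ appearing on the right, and the inductive hypothesis forces the right-hand side to vanish. For $k\neq\pm N$ we divide by $N^2-k^2$; for $k=\pm N$ the value is already zero by \ref{keqN}. For the cone containment, I would suppose $(k,n)\notin S_N$, i.e.\ $|k-N|>2n$ and $|k+N|>2n$, and check that each summand $z_{k\pm 2i,n-s}$ and $z_{k,n-s}$ on the right of \ref{ric} has its index outside $S_N$. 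The crucial inequalities are
\[
|k-N|\le|(k\pm 2i)-N|+2i,\qquad |k+N|\le|(k\pm 2i)+N|+2i,
\]
together with $0\le i\le s$ (which is exactly the degree condition \ref{exp1} that makes $G_{i,s}$ vanish for $i>s$). Thus if $(k\pm 2i,n-s)\in S_N$, one of $|(k\pm 2i)\mp N|\le 2(n-s)$ holds, and adding $2i\le 2s$ gives $|k\mp N|\le 2n$, contradicting $(k,n)\notin S_N$. A similar (simpler) inequality with $i=0$ handles the $\Lambda_s z_{k,n-s}$ term, using $s\ge 1$. So every term on the right is zero by induction, and since $(k,n)\notin S_N$ forces $k\ne\pm N$, we may divide by $N^2-k^2$ to conclude $z_{k,n}^\pm=0$.

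I don't expect real obstacles: the content of the lemma is a discrete finite-propagation-speed statement, and everything is arranged so that the triangle inequality does the work. The only care needed is to remember that the recursion is genuinely inductive (each sum has $s\ge 1$, hence level $n-s<n$), that the bound $i\le s$ is built into the assumption \ref{exp1} on the degree of $G_s$, and that the excluded diagonal $k=\pm N$ is taken care of by the normalization \ref{keqN} rather than by the recursion \ref{ric} itself.
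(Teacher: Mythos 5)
Your proof is correct and follows essentially the same route as the paper: induction on $n$ using the recursion \ref{ric}, with the base case from \ref{incond} and the observation that every index appearing on the right-hand side lies in a backward cone disjoint from the complement of $S_N$. Your explicit triangle-inequality computation is precisely the ``simple but cumbersome check'' the paper leaves to the reader, and your remarks about $i\le s$ coming from the degree bound and about $k=\pm N$ being handled by the normalization \ref{keqN} are the right points of care.
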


\begin{proof} The assertion on the parity of $k-N$ is easily proved by induction, but it is obvious if we think that for
even/odd $N$, $z_n^{\pm}$  is a periodic/anti-periodic function.  The other assertion  is proved  by induction on $n$.
  For  $n=0$ the assertion is true by the initial conditions \ref{incond}. Assume that  it is true up to the level $n-1$, that is
$  z_{h,m}^{\pm} = 0$,  if $(h,m) \notin S_N$, and $m\leq n-1$. We remark that, for a given pair of indices  $(k,n)\in \Z \times \N$, all  the indices of  $z_{k-2i,n-s}$, $z_{k+2i,n-s}$, $z_{k,n-s}$ in formula
\ref{ric} belong to the following backward cone:
\begin{equation}
\label{Bcone}
C_{\mathbf{k,n}} = \{ (h,j)\in \Z \times \N:  |k-h| \leq 2(n-j)\}\setminus\{(k,n)\}.
\end{equation}

By a  simple but cumbersome check, we have that if the vertex $(k,n)$ of $C_{\mathbf{k,n}} $ does not belong to
$S_N$, then $ C_{\mathbf{k,n}}\cap S_N = \emptyset$, and $j<n$ if  $(h,j)\in C_{\mathbf{k,n}}$.
Thus we get   $z_{k,n}=0$, if   $(k,n)\notin  S_N$.

\end{proof}


\begin{lemma}
\label{L2} The domain of dependence of $\Lambda_{n}^{\pm}$ is the backward cone  $C_{\mathbf{N,n}}$, as defined in \ref{Bcone}.
  The domain of dependence of $z^{\pm}_{k,n}$ is the backward cone $C_{\mathbf{k,n}}$.
  This means that the value of $z^{\pm}_{k,n}$ is not influenced by any  $z_{h,j}^{\pm}$ if  $(h,j) \notin C_{\mathbf{k,n}}.$

\end{lemma}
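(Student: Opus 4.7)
The plan is to establish both statements jointly by strong induction on $n$, tracking the two claims:
\textbf{(a)} $\Lambda_n^{\pm}$ depends only on $z_{h,j}^{\pm}$ with $(h,j)\in C_{\mathbf{N,n}}$;
\textbf{(b)} $z_{k,n}^{\pm}$ depends only on $z_{h,j}^{\pm}$ with $(h,j)\in C_{\mathbf{k,n}}$.
The case $n=0$ is vacuous, as $z_{k,0}^{\pm}$ is fixed by \ref{incond}.

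For the inductive step I would first verify (a) at level $n$: formula \ref{Bj} expresses $\Lambda_n$ as a linear combination of $z_{N\pm 2i, n-s}$ with $0\leq i\leq s$, $1\leq s\leq n$, each of whose indices satisfies $|N-(N\pm 2i)|=2i\leq 2s=2(n-(n-s))$, hence lies in $C_{\mathbf{N,n}}$. By the inductive hypothesis (b) applied at level $n-s<n$, the further dependencies of each $z_{N\pm 2i, n-s}$ are confined to $C_{\mathbf{N\pm 2i, n-s}}$, and the triangle-inequality computation $|h-N|\leq |h-(N\pm 2i)|+2i\leq 2(n-s-j)+2s=2(n-j)$, valid for $i\leq s$, yields the inclusion $C_{\mathbf{N\pm 2i, n-s}}\subseteq C_{\mathbf{N,n}}$ (note $j<n$ since $s\geq 1$). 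For (b), the case $k=\pm N$ is vacuous by \ref{keqN}, so assume $k\neq \pm N$. The recursion \ref{ric} produces direct dependencies on $z_{k\pm 2i,n-s}$, handled exactly as above, plus the products $\Lambda_s z_{k,n-s}$.

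The delicate step, and the main obstacle, is precisely the product $\Lambda_s z_{k,n-s}$: by (a) at level $s$ (with $s<n$, since $s=n$ contributes only when $z_{k,0}\neq 0$, i.e.\ $k=\pm N$, which is excluded), $\Lambda_s$'s dependencies lie in $C_{\mathbf{N,s}}$, a cone centered at $N$ rather than at $k$, which a priori could leak dependencies on points far from $k$. The resolution is that $\Lambda_s$ enters only multiplied by $z_{k,n-s}$, so the term contributes nothing unless $z_{k,n-s}\neq 0$, and by Lemma \ref{L1} this forces $(k,n-s)\in S_N$, i.e.\ $|k\mp N|\leq 2(n-s)$ for one choice of sign. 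In the case $|k-N|\leq 2(n-s)$, for any $(h,j)\in C_{\mathbf{N,s}}$ the triangle inequality yields
$$|h-k|\leq |h-N|+|N-k|\leq 2(s-j)+2(n-s)=2(n-j),$$
so $(h,j)\in C_{\mathbf{k,n}}$; the other sign is handled by using the $h\mapsto -h$ symmetry of the recursion to replace $C_{\mathbf{N,s}}$ with its mirror $C_{\mathbf{-N,s}}$ and applying the analogous estimate. Aside from this single interplay between Lemma \ref{L1} and the cone-containment triangle inequality, the proof reduces to straightforward bookkeeping.
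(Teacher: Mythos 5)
Your proof is correct and follows essentially the same route as the paper's: the direct $z$-dependencies in \ref{ric} are seen at once to lie in $C_{\mathbf{k,n}}$, and the problematic terms $\Lambda_s z_{k,n-s}$ are tamed exactly as in the paper by invoking Lemma \ref{L1} to restrict the range of $s$ for which $z_{k,n-s}^{\pm}\neq 0$, followed by the cone-containment triangle inequality. Your joint strong induction makes the transitive closure of dependencies explicit and treats both branches of $S_N$ directly (the paper instead reduces to $k\geq 0$ by the symmetry $z_{-k,n}^{\pm}=\pm z_{k,n}^{\pm}$), but these are refinements of presentation rather than a different argument.
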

\begin{proof}  The assertion  on the domain of dependence of $\Lambda_{n}^{\pm}$   is verified by direct  inspection of the indices in \ref{Bj}.
Let us verify the assertion on the cone of $z^{\pm}_{k,n}$. As we noted in the proof of Lemma \ref{L1}, every index
of the $z$'s appearing in \ref{ric} belongs to $C_{\mathbf{k,n}}$. We need to take care of the domains of dependence of
 the terms
$\Lambda_s^{\pm}$, with $s\leq n$, appearing in formula \ref{ric}. We assume for the moment $k \geq 0$. The case $k=N$ is obvious.
If $|k-N| = 2h >0$, we remark that, owing to Lemma \ref{L1}, the summation $  \sum_{s=1}^{n} \Lambda_s^{\pm} z_{k,n-s}^{\pm}$ does  not extended up to $n$. Indeed we have
$z_{k,0}^{\pm} = z_{k,1}^{\pm}  =\dots= z_{k,h-1}^{\pm} =0$, since their indices do not belong to the support set $S_N$, as it seen by the inequality
$|k-N| =2h > 2(n-s)$, $s>n-h$.
Therefore summation can be replaced by (intended to vanish if $h\geq n$),
\begin{equation}
\label{Bs}
 \sum_{s=1}^{n-h} \Lambda_s^{\pm} z_{k,n-s}^{\pm}, \qquad 2 h=|k-N|.
\end{equation} 

Since  $C_{N,s} \subset C_{N,j}$ if $s\leq j$, the largest cone of dependence of the terms $\Lambda_s$ in \ref{Bs} is $C_{\mathbf{N,n-h}}$ corresponding to the largest index $n-h$.
By definition of  $h$, $2h= |N-k| \leq 2|n-(n-h)| = 2h$, thus  its  vertex $(N,n-h)$
 belongs to  $C_{k,n}$. It follows that  the whole cone is contained in $C_{k,n}$.
This proves the assertion on the dependence cone of $z_{k,n}^{\pm}$, if $k\geq0$.
The case  $k<0$ reduces to the previous one by symmetry, since $z_{-k,n}^{\pm} = \pm z_{k,n}^{\pm}$
\end{proof}


\begin{figure}[htbp]\hspace{0.5cm}
\begin{minipage}{5cm}
\begin{center}

\scalebox{0.25}{\includegraphics{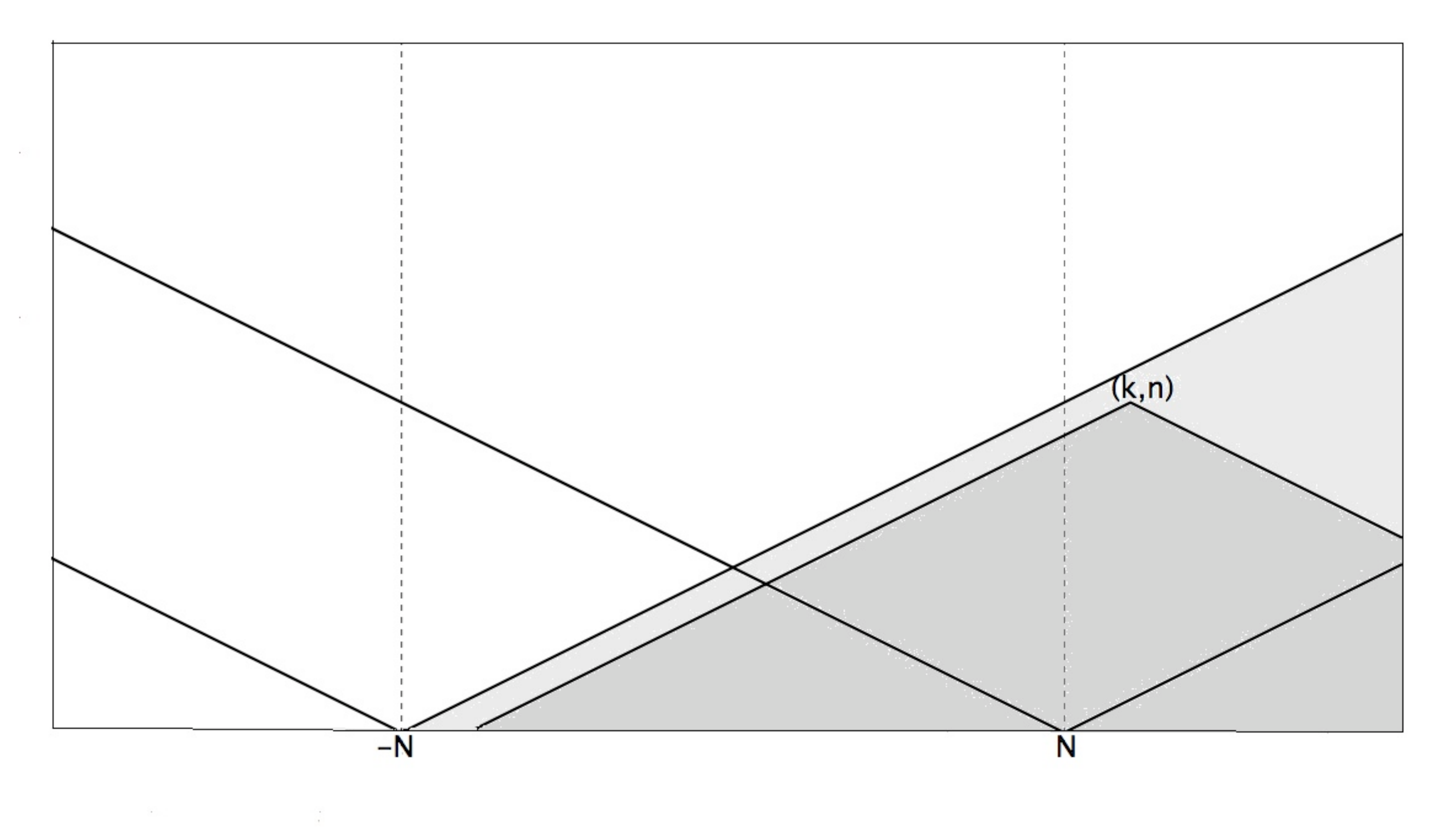}}
\end{center}
\end{minipage}
\caption{The shaded region represents  the set $R$ in which $z^{+}_{h,m} = z^{-}_{h,m} $. The darker region is its intersection with a domain of dependence $C_{\mathbf{k,n}}$, when $k>2n -N$} \label{Fig1}
\end{figure}


The main issue in the proof of Theorem \ref{hilldirect} consists in  identifying  the region in the plane $(k,n)$ in which $z_{k,n}^{+} = z_{k,n}^-$,  this is set out by the following Lemma:

\begin{lemma}
\label{L3}
Let $R$ be the region below the line $k=2n-N$, that is
$$ R = \{(k,n)\in \Z \times \N: \; k>2n -N \}.$$
Then we have $z_{k,n}^+ = z_{k,n}^-$, for every  $(k,n) \in R$, and  consequently  $\Lambda_n^+=\Lambda_n^-$ for $n\leq N-1$.

\end{lemma}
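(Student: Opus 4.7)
I plan to prove the lemma by induction on $n$, establishing simultaneously two statements at each level: $(a_n)$, that $z^+_{k,m} = z^-_{k,m}$ for every $(k,m)\in R$ with $m\leq n$, and $(b_n)$, that $\Lambda_m^+ = \Lambda_m^-$ for every $1\leq m\leq \min(n, N-1)$. The base case $n=0$ is immediate from \ref{incond}: $(k,0)\in R$ forces $k>-N$, hence $z^\pm_{k,0}=\delta_{k,N}$ irrespective of sign, and $(b_0)$ is vacuous.

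The geometric fact that makes the induction close is that $R$ is stable under the backward cones appearing in \ref{ric} and \ref{Bj}. Concretely, if $(k,n)\in R$ and $1\leq s\leq n$, $0\leq i\leq s$, then the indices $(k\pm 2i, n-s)$ and $(k, n-s)$ all lie in $R$; the only mildly delicate check is $k-2i > 2n-N-2i \geq 2(n-s)-N$, which uses $i\leq s$. To pass from $(a_{n-1})$ to $(a_n)$ I fix $(k,n)\in R$ and split by cases: the possibility $k=-N$ is excluded because $(k,n)\in R$ with $n\geq 1$ forces $k>2-N>-N$; the case $k=N$ requires $n<N$ and is handled by \ref{keqN}. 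For $k\neq\pm N$ I subtract the two copies of \ref{ric}: the Fourier contributions all vanish by $(a_{n-1})$ thanks to the closure observation; the $s=n$ term in the $\Lambda$ sum vanishes since $z^\pm_{k,0}=0$ for $k\neq\pm N$; and each remaining $s$ contributes $(\Lambda^+_s-\Lambda^-_s)\,z_{k,n-s}$, where $z_{k,n-s}$ denotes the common value obtained from $(a_{n-1})$.

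To conclude $z^+_{k,n} = z^-_{k,n}$ I must show this last quantity vanishes for every $1\leq s\leq n-1$. For $s\leq N-1$ the factor $\Lambda^+_s-\Lambda^-_s$ is zero by $(b_{n-1})$. For $s\geq N$, which only arises when $n\geq N+1$, the factor $z_{k,n-s}$ is itself zero by Lemma \ref{L1}: the inequality $k>2n-N$ is incompatible with either $k+N\leq 2(n-s)$ or $|k-N|\leq 2(n-s)$ once $s\geq N$, so $(k,n-s)\notin S_N$. Dividing by $N^2-k^2\neq 0$ then yields $(a_n)$. Finally, $(b_n)$ for $n\leq N-1$ follows at once: by \ref{Bj} and $(a_n)$, it suffices to verify that each index $(N\pm 2i, n-s)$ appearing there belongs to $R$, and a computation identical in spirit to the one above shows that this is exactly what the bound $n\leq N-1$ guarantees.

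The main obstacle, and the reason the specific slope and intercept defining $R$ are the correct ones, is matching the support statement of Lemma \ref{L1} against the range of $s$ for which $(b_{n-1})$ provides no information: both transitions happen precisely at $s=N$, and this coincidence is what allows the induction to close.
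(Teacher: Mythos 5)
Your proof is correct and follows essentially the same route as the paper's: an induction on $n$ over the region $R$, driven by the fact that the backward cones attached to \ref{ric}--\ref{Bj} stay inside $R$. The only organizational difference is that the paper delegates the treatment of the $\Lambda_s z_{k,n-s}$ terms to the domain-of-dependence Lemma \ref{L2}, whereas you handle them inline by strengthening the induction hypothesis to include $\Lambda_m^+=\Lambda_m^-$ for $m\leq\min(n,N-1)$ and by killing the terms with $s\geq N$ through the support Lemma \ref{L1} --- both mechanisms hinging on the same threshold at $s=N$ that you identify.
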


\begin{proof}
Let us set
$$ R_n = \{(k,j)\in R: \quad j\leq n\}.$$
We prove the assertion by induction on $n$.
We have    $z^{+}_{k,j} = z^{-}_{k,j}$ for  $ (k,j) \in R_0$, since the only non vanishing term is   $z^{\pm}_{N,0} =1$.
Assume that $z^{+}_{k,j} = z^{-}_{k,j}$ for every $ (k,j) \in R_{n-1}$.  Since the  domain of dependence of  $z^{\pm}_{k,n}$, with $(k,n)\in R_n$ is contained in  $R_{n-1}$,  we get  $z^{+}_{k,j} = z^{-}_{k,j}$ for every $ (k,j) \in R_n$.

We observe that the domain of dependence $C_{\mathbf{N,n}} $ of $\Lambda_n^{\pm}$ is contained in $R$   if $n\leq N-1$, thus the rest of the assertion follows
by  formula \ref{Bj} and Lemma \ref{L2}.

\end{proof}


\begin{remark} \label{rem2}
 Let     $G(t,q)$  be a function as in the assumptions of Theorem \ref{hilldirect}. If, for some $K>1$, we have
 $G_i\equiv0$, for $i=1,\dots,K-1$, then, in addition to (A), we have
$$
L_N(q)=O(q^K), \quad   N \leq K.
$$
In fact, from  formula 
\ref{Bj} we have immediately that  $\Lambda_i^\pm=0$
 for $i<K$, for $i=1,\dots,K-1$.
\end{remark}

\begin{remark} \label{rem1}
 Let $m\geq 1$ be  a fixed integer, and 
let us weaken the assumption on the $\pi$-periodic coefficients $G_n$   by requiring that  they  are 
polynomials of degree at most $2n$, for $n\leq m$ (instead of  $n\in \N$).  
Then  Lemmas 1, and 2 hold true up to the level $m$.  This means that  in Lemma 1, the domain of dependence of  $z_{k,n}^{\pm}$ is still
$  C_{\mathbf{k,n}}$, provided $n\leq m$,  while in Lemma 2, we have $z_{k,n}^+ = z_{k,n}^-$, for every  $(k,n) \in R$, with $n\leq m$.
It follows that  in  Theorem \ref{hilldirect}, we  still have 
$  L_N(q) = O(q^N)$  for the first $m$ instability tongues. 
\end{remark}

For future reference, we report here the computation of  the two first coefficients $\Lambda_1^\pm$ and $\Lambda_2^\pm$ of $\lambda_N^\pm(q)$ in \ref{eigexp}. By using \ref{incond}, \ref{keqN} and \ref{Bj}, we get the following expressions,
\begin{equation}
\label{B1}
 \Lambda^\pm_1(1) = -G_{0,1} \mp \frac{1}{2} G_{1,1},  \qquad
                              \Lambda^\pm_1(N)  =  -G_{0,1},  \qquad N=0, \quad N\geq 2.
\end{equation}
\begin{eqnarray}
\label{B2}
  \Lambda^\pm_2(1) &=& -G_{0,2} -\frac{1}{32} G_{1,1}^2 \mp \frac{1}{2} G_{1,2}, \\
	\label{B22}	             \Lambda^\pm_2(2) &=& -G_{0,2} +\frac{1}{24} G_{1,1}^2 \pm  \left(- \frac{1}{2} G_{2,2} + \frac{1}{16}G_{1,1}^2 \right), \\
                       \label{B23}       \Lambda^\pm_2(N)&=& -G_{0,2} + \frac{1}{8(N^2-1)}G_{1,1}^2, \qquad N=0, \quad N \geq 3.
\end{eqnarray}


\subsection{ Proof of Theorem \ref{hilldirect}:  Inverse problem} 
Here we  consider the Hill equation \ref{Hill1}
under the general assumption that $G$ is an even $\pi$ periodic function satisfying \ref{generalG} without restrictions
on the degree of $G_n$, and we prove the \emph{only if} part of Theorem \ref{hilldirect}.

We remark that formula \ref{Bj} for the coefficients in  the expansion of the eigenvalues $\lambda^{\pm}_N (q)$ is now replaced
by the the following summation
\begin{equation}
\label{Bjg} \Lambda_n^{\pm} = - \frac{1}{2} \sum_{s=1}^n \sum_{i=0}^\infty G_{i,s} \left( z_{N-2i,n-s}^{\pm} + z_{N+2i,n-s}^{\pm}  \right).
\end{equation}

 First of all, let us prove that under assumption (A), $G_1$ is a polynomial of degree at most $2$.
Let  $N\geq 1$ be an arbitrary eigenvalue number, and let us apply  formula
\ref{Bjg} for $n=1$. We have
$$ \Lambda_1^{\pm} = -\frac{1}{2}   \sum_{i=0}^\infty G_{i,1} \left( z_{N-2i,0}^{\pm} + z_{N+2i,0}^{\pm} \right).$$

Since  $z_{k,0}^{\pm} =\delta_{k,N} \pm \delta_{k,-N}$, we get
$$ \Lambda_1^{\pm} = -  G_{0,1} \mp \frac{1}{2} G_{N,1},$$
thus $ \Lambda_1^+ - \Lambda_1^-  = -G_{N,1}$. We infer  that   $L_N(q) =  - G_{N,1}  q + O(q^2)$ for every  $N\geq 1$.
Owing to the assumption  (A), we conclude that
 $G_{N,1} = 0$ for  $N>1$, which proves the assertion.

 Now let us consider an integer  $m\geq 2$, and  assume that
\begin{equation}
\label{pn} G_n(t) = \sum_{k=0}^n G_{k,n} \cos(2k t), \quad  \text{ for every }  n\leq m-1,
\end{equation}
that is $G_n$ is a polynomial of degree at most $2n$ for $n\leq m-1$.  
We shall show that  \ref{pn} leads to   $L_N = - G_{N,m}  q^m + O(q^{m+1}) $, for every $N>m$. 
Thanks to (A) we conclude that $G_{N,m}=0$, for every   $N>m$,   which means that   $G_m$ is a polynomial
of degree at most $2m$. Thus the assertion  will follow by induction on $m$.

Let us consider  the $N$-th eigenvalue branch $\lambda^\pm_N$, with $N>m$. Under assumption \ref{pn},  Lemma 2  and Lemma 3 hold true  for all levels  $n\leq m-1$ (see Remark \ref{rem1}), in particular  $\Lambda_n^+ = \Lambda_n^-$ for  $n\leq m-1$.
Let us apply  \ref{Bjg} for $n=m$. We have
\begin{equation}
\label{BN} \Lambda_m^{\pm}  =  - \frac{1}{2} \sum_{s=1}^{m-1}\sum_{i=0}^s G_{i,s} \left( z_{N-2i,m-s}^{\pm} + z_{N+2i,m-s}^{\pm}  \right)
   - \frac{1}{2} \sum_{i=0}^\infty G_{i,m} \left( z_{N-2i,0}^{\pm} + z_{N+2i,0}^{\pm}
\right)
\end{equation}

The first term on the right-hand side of  \ref{BN} does not depend on the determinations $\pm$, since all the indices $(N\pm 2i,m-s)$   are in the region $R$, up to the level $m$; let   $A$ be its value.   Thus, by the initial conditions at level $n=0$,   we get
$$  \Lambda_m^{\pm} = A - G_{0,m} \mp \frac{1}{2} G_{N,m}.$$

It follows that   $L_N = - G_{N,m}   q^m + O(q^{m+1}) $, for  every $N>m$. This concludes the proof of  Theorem \ref{hilldirect}.

\subsection{Existence of finitely many tongues}
\label{coesistenza}
 We point out that not only the instability tongues can  be thinner than predicted by the general result, but can even disappear. We will show some examples of existence of finitely many tongues  in   Section  \ref{examples}. 
 
 The question of the existence of  finitely many instability intervals (gaps) for the Hill equation, 
$$
z''(t)+(\beta+ Q(t)) z(t)=0
$$ 
has been deeply investigated by many authors, and dates back to the work of Ince \cite{In22} on the impossibility of the coexistence\footnote{This is the name of the subject in classical literature. Coexistence means the existence of two linearly independent eigenvalues, a condition equivalent to the vanishing of the instability interval.} for the Mathieu equation, see \cite[ch. VII]{MW}, and   \cite{DM} for interesting extensions and a recent account of the subject.  A detailed study of the coexistence problem for the related Ince equation is provided by \cite{Vol1}.

Starting from the introduction of the Lax pairs  formulation of the KdV hierarchy as a compatibility relation with the Hill operator, research on the multiplicity of eigenvalues    has come to a remarkable and celebrated result, essentially  thanks to the work of Lax  \cite{Lax} and Novikov  \cite{Nov}  around 1975  (see also  \cite{gold}):   at most  $n$ instability intervals fail to vanish if and only if   $Q$  satisfies a differential equation of the form, 
\begin{equation}
\label{kdv} Q^{(2n)} + H(Q,Q',\dots,Q^{(2n-2)})=0,
\end{equation}
where $H$ is a polynomial of maximal degree $n+2$. It turns out that equation \ref{kdv} is equivalent to  a linear combination of the first  $n$-order stationary KdV equations. We refer to  \cite{GW}     for an extensive bibliography, and a clear presentation of the modern theory.

In the  starting case  $n=1$,  there exists exactly one finite instability  interval if and only if $Q(t)$ satisfies the  equation $\, Q'' + AQ +B+ 3 Q^2 =0\,$
for  suitable real constants $A$, $B$  (the first proof of the necessity of this condition is due to Hochstadt \cite{H}).

For $n> 1$,  in the rest of the paper  we  will  refer to the following classical result of Ince \cite{In,EMOT} \cite[ch. VII]{MW},  on a particular class of elliptic coefficient of the Hill equation  offering  the simplest example  for which all but $n$  finite instability intervals disappear. Here we state the theorem  in a favorable form  for our purposes, see Appendix B for a brief discussion.
 
 \begin{theorem}[The Ince  theorem]
 \label{incemod}
Let   $Q$ be a non constant periodic solution of the differential equation, 
\begin{equation}
\label{Q}
Q'' + AQ +B+ 3 Q^2 =0
\end{equation}
where $A$, $B$ are real numbers such that $A^2-12B>0$. Then, for every positive integer $n$,  the Hill equation, 
$$
z''(t)+ \left(\beta+ \frac{n(n+1)}{2}\, Q(t)\right) z(t)=0
$$ 
has   exactly      $n+1$ instability intervals, including the unbounded one.
\end{theorem}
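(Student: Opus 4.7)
The plan is to reduce the Hill equation in the statement to the Weierstrassian Lamé equation
$$
z'' + \bigl(\lambda - n(n+1)\,\mathcal{P}(t)\bigr)\,z = 0
$$
via a single affine change of the dependent variable $Q$, and then to invoke the classical Ince theorem for that equation (which is what Appendix B is devoted to). In other words, I would not try to attack the spectral theory of \ref{Q} directly, but trade it for the thoroughly understood spectral theory of the Weierstrassian Lamé equation.

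First I would look for a substitution $Q(t) = \alpha\,\mathcal{P}(t) + \gamma$ converting \ref{Q} into the Weierstrass equation $\mathcal{P}'' = 6\mathcal{P}^2 - g_2/2$. Matching the coefficients of $\mathcal{P}^2$, of $\mathcal{P}$, and of the constant term uniquely determines
$$
\alpha = -2, \qquad \gamma = -\tfrac{A}{6}, \qquad g_2 = \tfrac{A^2 - 12 B}{12},
$$
so the hypothesis $A^2 - 12 B > 0$ says precisely that $g_2 > 0$. The remaining invariant $g_3$ is fixed by the first integral of \ref{Q}, hence by the energy level of the particular orbit $Q$. A short phase-plane analysis of \ref{Q} shows that the unique center is located at $Q_+ = (-A + \sqrt{A^2 - 12 B})/6$ and that the hypothesis \emph{``$Q$ is non-constant and periodic''} corresponds to selecting an orbit encircling this center; such an orbit fixes the value of $g_3$ and selects a real-period branch of $\mathcal{P}$ whose fundamental period coincides with that of $Q$.

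Once the identification is done, substituting $Q = -2\mathcal{P} - A/6$ into the coefficient of the Hill equation converts it into
$$
z'' + \bigl(\lambda - n(n+1)\,\mathcal{P}(t)\bigr)\,z = 0, \qquad \lambda := \beta - \tfrac{n(n+1)\,A}{12},
$$
and the affine bijection $\beta \mapsto \lambda$ maps instability intervals to instability intervals bijectively. It then suffices to apply the classical Ince theorem for the Weierstrassian Lamé equation: for every positive integer $n$ that equation has exactly $n$ bounded instability intervals together with the unbounded one, which is the conclusion to be proved. The main obstacle I anticipate is not the algebra but the phase-plane bookkeeping that ties the energy of $Q$ to the correct real-period branch of $\mathcal{P}$; equivalently, one must verify along the whole one-parameter family of admissible orbits that the discriminant $g_2^3 - 27 g_3^2$ has the sign ensuring a real-periodic $\mathcal{P}$ with matching period. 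Once that identification is rigorous, the statement reduces to the classical $n$-gap result summarized in Appendix B.
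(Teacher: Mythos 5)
Your proposal is correct and follows essentially the same route as the paper's own proof in Appendix~B: there too the affine substitution $Q=-2P-\tfrac{A}{6}$ (obtained via the first integral $(Q')^2+2Q^3+AQ^2+2BQ=2E$ with three distinct real roots rather than by matching coefficients in $\mathcal{P}''=6\mathcal{P}^2-g_2/2$, but to the same effect) identifies $Q(t)=-\tfrac{A}{6}-2\wp(t+\omega_3+c)$, and the constant is absorbed into the spectral parameter $\lambda=\beta-\tfrac{n(n+1)A}{12}$ before invoking the Weierstrassian form of the Ince theorem. The point you flag as the remaining obstacle (that a non-constant periodic orbit forces the cubic to have three distinct real roots, hence a positive discriminant and a real-periodic $\wp$ of the same period) is treated in the paper at the same level of detail, namely as a brief phase-plane observation.
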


In   Section \ref{examples} we will  provide some examples of coupled equations \ref{H2}--\ref{duff1} where  equation  \ref{H2} can be written in the form, 
$$z''(t)+ \left(\beta+ \gamma\, Q(t,q)\right) z(t)=0$$
with $Q(t,q)$ satisfying \ref{Q} for every $q$.
As a consequence, if  $\displaystyle{ \gamma=\frac{n(n+1)}{2}}$,   only a finite number of tongues  fail to vanish.
 
 
\section{Proof of Theorem \ref{duffhill}} \label{proof1}

In this section we  prove Theorem \ref{duffhill}. In the first part we provide the asymptotic development of the periodic solutions of equation \ref{duff1} by removing secular terms as in the classical Poincar\'e--Lindstedt method. In the second part we insert the development in the Hill equation \ref{H2}, and after an adequate normalization of the coefficients, we show that the assumptions of Theorem  \ref{hilldirect} are satisfied.
\subsection{Expansion of the solution of equation \ref{duff1}}
Let $u(t,q)$ be the solution to the initial-values problem \ref{duff1}.
According to our assumptions on the function $f$, we  write the   Taylor series of $f$ in a neighborhood of $0$:
$$ f(x) = \sum_{k=2}^\infty \alpha_k x^k, \qquad |x| <r_0.$$

Let   $r_1$ be the least modulus of the singular points of the equation \ref{duff1}, that is  $r_1 = \min \{ |x| : \, 4x+f(x) =0\}$,  $r_0=+\infty$ in case the set is empty. The  parameter $q$ will be subject to several restrictions, the first one being  $|q| < \min\{r_0,r_1\}$ so that the solution of \ref{duff1} are periodic and depend analytically on $q$. From now on we simply assume that the parameter $q$ is  small enough so that our power series converge.

Let us denote by    $T(q)$  the period of $u(t,q)$ and by $ \omega(q)= \pi/T(q)$  its angular frequency.  Both depend analytically on $q$ in some (in general) smaller neighborhood of $0$, thus we can write the following power series expansion  ($\Omega_0=1$),
\begin{equation}
\label{Omega}
 \Omega(q) = \omega(q)^2 = \sum_{n=0}^\infty q^n \Omega_n.
\end{equation}

If we rescale time in  \ref{duff1}  by setting   $\tau = \omega(q) t$,  and  the solution
$ u(t,q) = q U(\tau,q)$,  so that $U(\tau +\pi;q) = U(\tau,q)$, the problem \ref{duff1}  reads as follows,
\begin{equation}
\label{U}
\Omega(q) U''(\tau) + 4 U(\tau) + \sum_{n=1}^{\infty} \alpha_{n+1} q^n U(\tau)^{n+1} =0, \qquad U(0)= 1, \quad U'(0)=0.
\end{equation}

By the Poincar\'e expansion theorem (see \cite[Th. 9.2]{V}),  $U(\tau,q)$ can be expressed, on the fixed time interval $[0,\pi]$ (thus on $\R$), as  a convergent power series with respect to $q$ in a neighborhood of $0$,  uniformly with respect to $\tau$:
\begin{equation}
\label{expansion}
 u(t,q) = qU(\tau,q) = \sum_{n=1}^\infty q^nu_n(\tau).
 \end{equation}

The coefficients $u_n$ in the expansion \ref{expansion} are periodic and, by the initial conditions in \ref{U}, we obtain that
\begin{equation}
\label{u_n}
 u_n(\tau + \pi) = u_n(\tau), \quad u_1(\tau) = \cos (2\tau), \quad u_n(0) = u'_n(0)=0, \quad n\geq2.
 \end{equation}

If we plug the expansion \ref{expansion} into the problem  \ref{U} we get, in addition to conditions \ref{u_n}, the sequence of recurrent differential equations,
\begin{align}
  & u_1'' + 4u_1 =0,  \label{uno}   \\
&   u_2'' +  4u_2 = -    \Omega_1 u_1''  -  \alpha_2 u_1^2 ,   \label{due} \\
&  u_3'' +  4u_3 = -   \Omega_2 u_1'' -  \Omega_1 u_2'' -   2\alpha_2 u_1 u_2 - \alpha_3 u_1^3, \label{tre} \end{align} 
and in general, for $n>3$, 
 \begin{equation}  u_n'' + 4u_n  =F_n(\tau)  \label{n} \end{equation} 
where 
$$ F_n(\tau) = - \sum_{k=1}^{n-1} \Omega_k u_{n-k}''  - \sum_{k=2}^{n} \alpha_{k} \sum_{i_1 +\dots+ i_{k}=n}u_{i_1} \cdots  u_{i_{k}}
$$

Periodic solutions of the $n$-th recurrent equation are possible  if  secular terms are removed  from the right-hand side of the equation, so that  the coefficient of the resonant term  in $F_n(\tau)$ vanishes. This means that we have to impose  that  $\int_0^\pi F_n(\tau) \cos(2\tau) \, {\rm d}\tau =0$,   which  is the first step  to obtain  the asymptotic expansions of  $\omega(q)$,  and subsequently of
$u(t)$,  by the Poincar\'e--Lindstedt method (see \cite[ch. 10]{V}).

By a simple  inductive argument, we  can show  the following property of the coefficients $u_n$:

\begin{proposition}
\label{prop1}
The coefficients  $u_n(\tau)$, $n\geq 1$  in the power series   \ref{expansion}  are even $\pi$-periodic trigonometrical polynomials of degree $2n$.
\end{proposition}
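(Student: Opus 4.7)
The plan is to prove the proposition by induction on $n$, tracking three properties simultaneously: evenness, $\pi$-periodicity, and the bound $2n$ on the trigonometric degree.

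\textbf{Base case.} For $n=1$, we have $u_1(\tau)=\cos(2\tau)$ by \ref{u_n}, which is manifestly even, $\pi$-periodic, and of degree $2=2\cdot 1$.

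\textbf{Inductive step.} Assume the claim for $u_1,\dots,u_{n-1}$. I first examine the right-hand side $F_n(\tau)$ of \ref{n}. Each $u_{n-k}''$ has the same trigonometric degree as $u_{n-k}$, namely at most $2(n-k)\le 2n-2$, and is even and $\pi$-periodic. Each monomial $u_{i_1}\cdots u_{i_k}$ in the convolution sum is a product of even $\pi$-periodic trigonometric polynomials whose degrees add, so it is even, $\pi$-periodic, and of degree at most $2(i_1+\dots+i_k)=2n$ (using the standard product-to-sum identities for cosines). Therefore
\[
F_n(\tau)=\sum_{k=0}^{n} f_{k,n}\cos(2k\tau)
\]
for suitable real coefficients $f_{k,n}$.

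\textbf{Solving the recurrence equation.} The Poincaré--Lindstedt removal of secular terms (which in \ref{n} amounts to the equation $f_{1,n}=0$ via a suitable choice of $\Omega_{n-1}$, as indicated just before the proposition) ensures that $F_n$ has no $\cos(2\tau)$ component, so the equation $u_n''+4u_n=F_n$ admits $\pi$-periodic solutions. A particular solution is obtained term-by-term by
\[
\cos(2k\tau)\ \longmapsto\ \frac{\cos(2k\tau)}{4-4k^2}\qquad (k\ne 1),
\]
and yields an even $\pi$-periodic trigonometric polynomial of degree at most $2n$. The general $\pi$-periodic solution differs from this by $A\cos(2\tau)+B\sin(2\tau)$; the condition $u_n'(0)=0$ combined with the evenness of the particular solution forces $B=0$, while $u_n(0)=0$ uniquely determines $A$. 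The resulting $u_n$ is an even $\pi$-periodic trigonometric polynomial of degree at most $2n$, completing the induction.

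\textbf{Anticipated obstacle.} There is no serious obstacle beyond bookkeeping: the only subtle point is that the claim $\deg F_n\le 2n$ relies on the exact matching $\deg(u_{i_1}\cdots u_{i_k})=2(i_1+\dots+i_k)=2n$ when $i_1+\dots+i_k=n$, so the bound is attained and cannot be improved to $2n-2$. One should also check that the solvability of $u_n''+4u_n=F_n$ with the imposed initial conditions (together with removal of the $\cos(2\tau)$ resonance) is consistent with the Poincaré--Lindstedt scheme recalled in the paragraph preceding the statement; this is guaranteed by the fact that $\Omega_{n-1}$ is a free parameter at the $n$-th step, fixed precisely to annihilate the resonant coefficient in $F_n$.
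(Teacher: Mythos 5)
Your proof is correct and follows essentially the same route as the paper's: induction on $n$, observing that the convolution products and the $\Omega_k u_{n-k}''$ terms keep $F_n$ an even $\pi$-periodic cosine polynomial of degree at most $2n$, removing the $\cos(2\tau)$ resonance via $\Omega_{n-1}$, and then writing the unique solution with $u_n(0)=u_n'(0)=0$ as the term-by-term particular solution plus a $\cos(2\tau)$ correction. The only caveat is your side remark that the degree bound $2n$ is always attained; this can fail (e.g.\ if $\alpha_2=0$ then $u_2\equiv 0$), but the proposition, as used in the paper, only requires degree at most $2n$, so this does not affect the argument.
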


\begin{proof} 

We prove the assertion by induction on $n\in\N$. It is obviously true for $n=1$, and let us assume it is true for $1\leq j \leq n-1$  ($n\geq 2$).
By a simple computation, it follows  that the multilinear terms in $F_n(\tau)$ of  the $n$-th recursive differential equation, that is
$$ \sum_{i_1+\dots+i_{k} =n} u_{i_1} u_{i_2}\cdots  u_{i_{k}}, $$
and the term $\sum_{k=1}^{n-1} \Omega_k u_{n-k}'' $,
are even $\pi$-periodic polynomials of degree  $\leq 2n$.
Thus, once the resonance has been removed,  the source term $F_n$ in the $n$-th equation  has the following expression,
$$ F_n(\tau)  = \sum_{k=0, k\neq1}^{n} c_k\cos(2k\tau).  $$
Therefore, recalling that $u_n(0)=u_n'(0)=0$,   the  solution of the $n$-th problem, is  given by
$$ u_n(\tau) = \sum_{k=0, k\neq1}^{n} \frac{c_k}{4-4k^2} \cos(2k\tau) - \sum_{k=0, k\neq1}^{n} \frac{c_k}{4-4k^2} \cos(2\tau),$$
which proves the assertion.
\end{proof}


\subsection{Hill Equation}

Here we turn our attention to  the periodic eigenvalues problem for  the Hill equation \ref{H2}.
We need to rewrite the equation  in the form \ref{Hill1}:  we  rescale the time variable,  $\tau = \omega(q)t$, set  $z(t) = Z(\omega(q) t)$. Then, by introducing    the new coefficients,
\begin{equation}
\label{Gbeta} \lambda(q) = \beta(q)/\Omega(q) ,\qquad
   G(\tau,q)= g(qU(\tau,q))/\Omega(q) ,
   \end{equation}
we  get rid of the   $\Omega(q)$  factor by  absorbing it in a modified eigenvalues problem,   so that we obtain
a  Hill equation with fixed period  $\pi$:
\begin{equation}
\label{Zold}
Z''(\tau) +  \left(\lambda(q)   +   G(\tau,q) \right)Z(\tau) =0.
\end{equation}

\begin{lemma}
\label{L4}
Let $g$ be a real  analytical function in a neighborhood of $0$, $g(0)=0$, and let $U(\tau,q)$ be the solution of problem \ref{U}.
 Then
the following expansion holds true in a neighborhood of the origin, uniformly with respect to $\tau$,
\begin{equation}
\label{Gexp}  G(\tau,q)=  \sum_{n=1}^\infty q^n G_n(\tau),  \quad \tau \in \R, \end{equation}
where $G_n(\tau)$ is an even $\pi$-periodic trigonometrical polynomial of degree  $2n$ as in formula \ref{exp1}.
\end{lemma}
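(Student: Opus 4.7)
The plan is to substitute the expansion \ref{expansion} of $u(t,q)=qU(\tau,q)$ from Proposition \ref{prop1} into the composite function $g(qU)$, expand the result as a power series in $q$ using the analyticity of $g$, and then multiply by the power series of $1/\Omega(q)$, which is analytic near $q=0$ since $\Omega(0)=1$. The trigonometric--polynomial structure of the $G_n$ will follow from the corresponding structure of the $u_n$ (Proposition \ref{prop1}) together with the elementary fact that the product of two even $\pi$-periodic trigonometric polynomials of degrees $2a$ and $2b$ is an even $\pi$-periodic trigonometric polynomial of degree at most $2(a+b)$.

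More concretely, I would write the Taylor series $g(x)=\sum_{k\ge 1}\gamma_k x^k$ (using $g(0)=0$) and plug in $u(t,q)=\sum_{n\ge 1}q^n u_n(\tau)$. Collecting powers of $q$ yields
\[
g(qU(\tau,q))=\sum_{n=1}^\infty q^n \widetilde{G}_n(\tau),\qquad \widetilde{G}_n(\tau)=\sum_{k=1}^n \gamma_k\!\!\!\sum_{i_1+\cdots+i_k=n}\!\!\! u_{i_1}(\tau)\cdots u_{i_k}(\tau).
\]
Each summand in $\widetilde G_n$ is a product of $k$ even $\pi$-periodic trigonometric polynomials of respective degrees $2i_1,\dots,2i_k$ with $i_1+\cdots+i_k=n$, hence an even $\pi$-periodic trigonometric polynomial of degree at most $2n$. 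Thus $\widetilde G_n$ has exactly the form \ref{exp1}.

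Next I would expand $1/\Omega(q)$. Since $\Omega(q)=1+\Omega_1 q+\Omega_2 q^2+\cdots$ is analytic with $\Omega(0)=1$, we have $1/\Omega(q)=\sum_{m\ge 0}\Omega_m^\ast q^m$ for real constants $\Omega_m^\ast$ with $\Omega_0^\ast=1$, convergent for small $q$. Multiplying the two power series gives $G(\tau,q)=\sum_{n\ge 1}q^n G_n(\tau)$ with
\[
G_n(\tau)=\sum_{k=1}^n \Omega_{n-k}^\ast\,\widetilde G_k(\tau),
\]
which is a real linear combination of even $\pi$-periodic trigonometric polynomials of degrees at most $2k\le 2n$, hence itself of the desired form \ref{exp1}.

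The only remaining point is the uniform convergence of the expansion for $\tau\in\R$. This is essentially automatic: by the Poincar\'e expansion theorem invoked before Proposition \ref{prop1}, $U(\tau,q)$ is analytic in $q$ near $0$ with values in $C(\R)$ (because the $u_n$ are $\pi$-periodic and the series converges uniformly on $[0,\pi]$, hence on $\R$). Since $g$ is analytic at $0$, the composition $g(qU(\tau,q))$ is analytic in $q$ with values in $C(\R)$ for $q$ small enough, and multiplication by the scalar analytic factor $1/\Omega(q)$ preserves this property. I do not expect any genuine obstacle here; the only point demanding a bit of care is the degree bookkeeping in the multilinear expansion, but the constraint $i_1+\cdots+i_k=n$ together with the degree bound $\deg u_{i_j}\le 2i_j$ from Proposition \ref{prop1} makes the bound $\deg G_n\le 2n$ immediate.
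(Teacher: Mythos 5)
Your proposal is correct and follows essentially the same route as the paper: Taylor-expand $g$, substitute the expansion of $qU(\tau,q)$ to get the multilinear formula for the coefficients, invoke Proposition \ref{prop1} for the degree bound, and then fold in the power series of $1/\Omega(q)$ as a linear combination. The degree bookkeeping and the uniform-convergence remark match the paper's argument, so there is nothing to add.
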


\begin{proof}
From our assumptions we may write, for $q$ and $x$ sufficiently small,
\begin{equation}
\label{g}
g(x) = \sum_{k=1}^\infty \gamma_k x^k, \qquad \frac{1}{\Omega(q)} = \sum_{n=0}^\infty \kappa_n q^n.
\end{equation}

By composition of analytical functions, we obtain
$$  g(qU(\tau,q)) = \sum_{n=1}^\infty q^n g_n(\tau),   $$
where the coefficients $g_n(\tau)$ are given by the following expressions,
\begin{equation}
\label{gn}
g_n(\tau) = \sum_{k=1}^n \gamma_k \sum_{h_1+\dots+h_k=n} u_{h_1}\cdots u_{h_k}. \end{equation}

From Proposition \ref{prop1}, and by a simple computation,   we get that $g_n(\tau)$ is an even $\pi$-periodic trigonometrical polynomial  whose degree  does not exceed  $2n$.
The assertion follows since $G_n$, owing to   \ref{Gbeta}, \ref{g} is a linear combination of $g_j$, $j\leq n$, that is
\begin{equation}
\label{gG}
G_n(\tau) = \sum_{j=0}^n g_j(\tau)\kappa_{n-j}.
 \end{equation}
  \end{proof}


\subsection{Conclusion of the proof of Theorem \ref{duffhill}}

Let us write the power series  expansion of $\beta^\pm_N(q)$,
\begin{equation}
\label{betan}
\beta^\pm_N(q)=N^2+\sum_{n=1}^\infty B^\pm_{n}(N)q^n,
\end{equation}
where, from \ref{Gbeta},  the coefficients are given by
\begin{equation}
\label{beta} B_n^\pm(N) = \sum_{j=0}^n \Lambda_j^{\pm}(N)\, \Omega_{n-j}.
\end{equation}

Owing to Lemma \ref{L3}, the assumptions of Theorem \ref{hilldirect} are satisfied by the equation \ref{Zold}. It follows that for any eigenvalue number $N$,  the coefficients in the expansion of $\lambda_N^{\pm}$ satisfy
$\Lambda^+_n(N) = \Lambda^-_n(N)$, for $n<N$,
thus  $B_n^+(N) =B_n^-(N)$, for $n<N$ which   proves the assertion.  In particular for the leading term in the expansion (A), we have
$$
C_N = B_N^+(N) -B_N^-(N)  = \Lambda_N^+(N) - \Lambda_N^-(N).   $$ 
\hfill $\square$
\subsection{Additional results}
In certain  cases it is  possible to provide a more precise asymptotic expansion of $L_N(q)$,  as it is shown in the following  Proposition.

\begin{proposition}
\label{Ogrande2}
Let  $K\geq 1$ be the first non-vanishing  power in the expansion \ref{g} of $g(x)$,   that is $g(x) = \gamma_K x^K + O(x^{K+1})$, $\gamma_K \neq0$.
Then, for every $1\leq N\leq K$,  we have
\begin{equation}
\label{Lk} L_N(q)= C_{K,N}\,q^K +O(q^{K+1}),
\end{equation}

In addition, $C_{K,N} \neq 0$    when $N$ and $K$  have the same parity, whereas $C_{K,N}=0$ when $K-N$ is odd.
\end{proposition}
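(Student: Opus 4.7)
The plan is to apply the machinery from the proof of Theorem \ref{hilldirect} at the first non-trivial level of the recursion. By hypothesis $g(x)=\gamma_K x^K+O(x^{K+1})$, so in the expansion $g(qU(\tau,q))=\sum q^n g_n(\tau)$ obtained in Lemma \ref{L4} via formula \ref{gn}, we have $g_n\equiv 0$ for $n<K$. Combined with $1/\Omega(q)=1+O(q)$, this yields $G_n\equiv 0$ for $n<K$ in the expansion \ref{Gexp}. Remark \ref{rem2} then gives at once the estimate $L_N(q)=O(q^K)$ for every $1\le N\le K$, which is the estimate \ref{Lk} modulo identification of the leading coefficient $C_{K,N}$.

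Next I would identify this coefficient. Since $G_s\equiv 0$ for $s<K$, the recursive formula \ref{Bjg} yields $\Lambda^\pm_n(N)=0$ for every $n<K$, and at level $n=K$ it collapses to
\[
\Lambda_K^\pm(N) \;=\; -\tfrac{1}{2}\sum_{i\ge 0} G_{i,K}\bigl(z^\pm_{N-2i,0}+z^\pm_{N+2i,0}\bigr).
\]
Substituting the initial data $z^\pm_{k,0}=\delta_{k,N}\pm\delta_{k,-N}$ from \ref{incond} gives $\Lambda_K^\pm(N)=-G_{0,K}\mp\tfrac12 G_{N,K}$, hence $\Lambda_K^+(N)-\Lambda_K^-(N)=-G_{N,K}$. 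Translating back to the original spectral parameter through \ref{beta}, and using that $B_j^+(N)=B_j^-(N)$ for every $j<K$ because each $\Lambda_j^\pm$ vanishes, I obtain $C_{K,N}=-G_{N,K}$.

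It then remains to compute $G_K(\tau)$ explicitly. From $U(\tau,q)=u_1(\tau)+O(q)=\cos(2\tau)+O(q)$ and $\Omega(q)=1+O(q)$, reading off the coefficient of $q^K$ in $g(qU(\tau,q))/\Omega(q)$ gives $G_K(\tau)=\gamma_K\cos^K(2\tau)$. The elementary expansion of $\cos^K(2\tau)$ as $2^{-K}(e^{2i\tau}+e^{-2i\tau})^K$ shows that only frequencies $\cos(2m\tau)$ with $m\equiv K\pmod 2$, $0\le m\le K$, appear; more precisely, the coefficient of $\cos(2N\tau)$ is
\[
G_{N,K}=\gamma_K\, 2^{1-K}\binom{K}{(K-N)/2}
\]
whenever $N\equiv K\pmod 2$ and $1\le N\le K$, and $G_{N,K}=0$ otherwise. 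Consequently $C_{K,N}=-G_{N,K}$ is nonzero precisely when $N$ and $K$ have the same parity and vanishes when $K-N$ is odd.

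There is no real obstacle: the argument is essentially book-keeping at the first non-vanishing level of the recursion, together with an elementary Fourier expansion of $\cos^K(2\tau)$. The only subtlety worth checking is that the correction factor $1/\Omega(q)$ and the higher order terms $u_n$ with $n\ge 2$ do not contribute to $G_K$, which is immediate since $\Omega_0=1$ and only the leading power of $q$ in $g(qU(\tau,q))$ matters for the coefficient $G_K(\tau)$.
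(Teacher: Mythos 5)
Your proposal is correct and follows essentially the same route as the paper: vanishing of $G_n$ for $n<K$ plus Remark \ref{rem2} gives the $O(q^K)$ estimate, formula \ref{Bj} with the level-zero initial data gives $C_{K,N}=-G_{N,K}$, and the binomial expansion of $\cos^K(2\tau)$ settles the parity condition, with your explicit value of $G_{N,K}$ agreeing with the paper's $C_{K,N}=-\gamma_K 2^{1-K}\binom{K}{(K-N)/2}$. No gaps.
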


\begin{proof}
If $K>1$, from formula
\ref{gn}, we get $g_n(\tau)\equiv G_n(\tau) \equiv 0$, for $n<K$. Then, owing to  Remark \ref{rem2}, we have    that $\Lambda_n(N)=0$ for $n<K$.  From formula \ref{beta},  it follows that $ B_n^\pm(N)=N^2 \Omega_n$, for $n<K$. This proves that $L_N(q)=O(q^K)$  for $ 0< n < K$.

Let $K\geq 1$.  By using  condition  \ref{incond}, and formula \ref{Bj},  we can compute the coefficient $\Lambda_{K}^\pm(N)$  for $N \leq K$.  This reduces to
 \begin{equation} 
\label{lambdaNK}
\Lambda_{K}^\pm(N) = - \frac{1}{2} \sum_{i=0}^K G_{i,K} \left( z_{N-2i,0}^{\pm} + z_{N+2i,0}^{\pm}  \right) =-G_{0,K}\mp  \frac{1}{2}G_{N,K}.
\end{equation} 
Then we have $C_{K,N} =  \Lambda_{K}^+(N)- \Lambda_{K}^-(N)  = - G_{N,K}$.
From formulas \ref{gn} and  \ref{gG},  we get that
\begin{equation}
\label{Gg}
 G_K(\tau)=g_K(\tau)=\gamma_K (u_1(\tau))^K=\gamma_K (\cos(2\tau))^K.
\end{equation} %
Since   $G_{N,K}$ is the $2N$-th Fourier coefficient of $G_K(\tau)$, we obtain
\begin{equation}
\label{GNK}
G_{N,K}= \frac{2\gamma_K}{\pi}\int_{-\pi/2}^{\pi/2}{\cos(2\tau)^K \,\cos(2N \tau) {\rm d}\tau}.
\end{equation} %

This integral does not vanish if and only if $K$ and $N$ have the same parity, as it follows by  the following formula
$$ 2^{K-1} \cos(2\tau)^K = \cos(2K\tau) + K \cos(2(K-2)\tau) + \binom{K}{2} \cos(2(K-4)\tau) + \cdots \, .$$

In particular, for $K-N=2m$, we get the expression
$  \displaystyle C_{K,N} =- \frac{ \gamma_K }{2^{K-1}} \binom{K}{m} $.

\end{proof}

For example, if $g(x)= \gamma_4 x^4 + O(x^5)$, the second and fourth tongues have order of tangency equal to $4$, in particular they do not collapse to a single line, while the first and third tongues have a contact of   order   at least $5$.   

As an immediate consequence, if  $g'(0)=\gamma_1 \neq 0$,    the first instability tongue  never reduces to a single curve:

\begin{corollary}
For every function $f$ satisfying the assumptions of Theorem  \ref{duffhill}, if $g'(0)\neq 0$, then the first instability tongue of  equation \ref{H2} cannot collapse to a single line, that is $L_1(q)\neq 0$.
\end{corollary}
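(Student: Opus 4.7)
The plan is to read the corollary as an immediate specialization of Proposition \ref{Ogrande2}. The hypothesis $g'(0)\neq 0$ means precisely that the first non-vanishing coefficient in the expansion $g(x)=\sum_{k\geq 1}\gamma_k x^k$ occurs at $k=1$; in the notation of the proposition, $K=1$ with $\gamma_1\neq 0$. So I would simply apply Proposition \ref{Ogrande2} with $K=1$ and $N=1$, which lies in the admissible range $1\leq N\leq K$.

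The conclusion of Proposition \ref{Ogrande2} then gives $L_1(q)=C_{1,1}\,q+O(q^2)$, and the parity part of the proposition is exactly what I need: $K=1$ and $N=1$ have the same parity, so $C_{1,1}\neq 0$. In fact I can read off the value explicitly from the closed-form expression at the end of the proof of Proposition \ref{Ogrande2}: with $K-N=2m$ forcing $m=0$, one gets
\[
C_{1,1}=-\frac{\gamma_1}{2^{0}}\binom{1}{0}=-\gamma_1\neq 0,
\]
so $L_1(q)=-\gamma_1 q+O(q^2)$ is non-zero for all sufficiently small $q\neq 0$, and in particular the first tongue cannot collapse to a single curve.

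There is no substantive obstacle here: the assumption on $f$ enters only through the fact that Theorem \ref{duffhill} and Proposition \ref{Ogrande2} are applicable (the rescaling producing a generalized Mathieu equation is already carried out), and the conclusion is a one-line specialization. The only point that would deserve a brief sentence in the write-up is to check that $N=1$ is actually covered by the parity clause of Proposition \ref{Ogrande2}, which it trivially is.
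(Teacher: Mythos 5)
Your argument is exactly the paper's: the corollary is stated there as an immediate consequence of Proposition \ref{Ogrande2}, obtained by taking $K=1$, $N=1$ and invoking the same-parity clause to get $C_{1,1}=-\gamma_1\neq 0$. The proposal is correct and adds nothing that needs checking beyond what the proposition already provides.
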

\begin{remark}
\label{f dispari g pari}
As we mentioned in the introduction, in our discussion of the instability tongues,  we assumed  that  equation \ref{H2}  has the same   period  $T(q)$ as $u(t)$.  As a matter of fact, the period of $g(u(t))$ may  be a fraction of   $T(q)$;   this  occurs  for instance when   $f$ and $g$ are   odd and even functions respectively,   and    the period of $g(u(t))$ is half the period of $u(t)$.     In this case,    the potential function $
2u^2 + \int_0^u f(x) \, dx$ of equation \ref{duff1}  is an even function, thus $u(t+T(q)/2) = - u(t)$ which yields $g( u(t+T(q)/2)) = g(u(t))$.

 It follows that the real eigenvalues of the problem branch out
only for even $N$, or  in other words $L_N(q) \equiv 0$ for odd $N$.
The asymptotic estimate (A) of Theorem 1 is  of course  satisfied    with $C_N=0$ for odd $N$. 
\end{remark}


\section{Shape of the instability tongues}
\label{additional}
\medskip
   The purpose of this section is to characterize the form of instability tongues related to the system \ref{duff1}--\ref{H2} for small $q$. Applications to some significant  cases related to  the theory of suspension bridges are provided in Section \ref{examples}. 

 From the geometrical point of view, we  observe  that the instability tongues  starting  from $ \beta^{\pm}_N(0)=N^2 $ may be  either ``trumpet shaped"    if one of the curves $ \beta = \beta^{\pm}_N (q) $  is  decreasing  and the other increasing, or   ``horn shaped" if   are both increasing or both decreasing. 
For instance, in the case of the Mathieu equation (see also the following Proposition \ref{alpha2alpha3z}) it is well-known that the first two tongues are    trumpet shaped  while the others are  horn shaped  for small values of $q$.  

 The question is relevant for stability analysis at small energies  when we consider the parameter $\beta$ in \ref{H2} as fixed.
 In case of a     trumpet shaped  tongue, the line $ \beta = N^2 $ falls into the instability region (at least for $ q $ small), and the intersection of the tongue with a straight line $\beta = const $ close to $ N^2 $, after a small interval of stability, intercepts  a long interval of instability. 
Viceversa, for  a    horn shaped  tongue, the intersection with  a straight line $\beta = const $ close to $ N^2 $ is at most  a very small   segment. 

In the following proposition, $\alpha$ and $\gamma$ coefficients refer to the power series expansion of $f$ and $g$ respectively. 
\begin{proposition}
\label{alpha2alpha3z}
The asymptotic behavior of the instability tongues, up to second order in  $q$ is the following:

The first tongue is  always   {\bf trumpet shaped} if $\gamma_1 \neq 0$. It has an approximate length $L_1(q) =- \gamma_1( q+\frac{1}{12}\alpha_2 q^2) +o(q^2)$,  as $q \to 0$.

The second tongue   has an approximate length $L_2(q) = (\frac{1}{8}\gamma_1^2-\frac{1}{24} \gamma_1 \alpha_2- \frac{1}{2}\gamma_2  )q^2 +o(q^2)$,  as $q \to 0$. It may be either   {\bf trumpet or   horn shaped},  depending on the parameters.

As for the next tongues, they are generically    {\bf horn shaped},  with the exception of very particular values of the parameters  for which  $B^\pm_j(N) = 0$,  $j < N$.
\end{proposition}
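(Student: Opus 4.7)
The plan is to compute the first two Taylor coefficients of the eigenvalue branches $\beta_N^\pm(q)$ around $q=0$ by combining the Poincar\'e--Lindstedt expansion of Section~3.1 with the explicit expressions \ref{B1}--\ref{B23} for the $\Lambda_n^\pm(N)$ and the conversion \ref{beta}. Once the leading coefficients $B_n^\pm(N)$ of $\beta_N^\pm(q)-N^2$ are known, the tongue shape is read off from the signs of the first non-vanishing ones: opposite signs yield a trumpet tongue, equal signs a horn.

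First I would expand $u(t,q)$ up to order $q^2$. Since $u_1(\tau)=\cos(2\tau)$, the right-hand side of \ref{due} becomes $4\Omega_1\cos(2\tau)-\tfrac{\alpha_2}{2}(1+\cos(4\tau))$, so removal of the secular $\cos(2\tau)$ term forces $\Omega_1=0$, and integration with the data \ref{u_n} gives
\[
u_2(\tau) = -\tfrac{\alpha_2}{8} + \tfrac{\alpha_2}{12}\cos(2\tau) + \tfrac{\alpha_2}{24}\cos(4\tau).
\]
Plugging this, together with $\kappa_0=1$ and $\kappa_1=-\Omega_1=0$, into \ref{gn} and \ref{gG} yields $G_1(\tau)=\gamma_1\cos(2\tau)$ and
\[
G_2(\tau) = \Bigl(\tfrac{\gamma_2}{2}-\tfrac{\gamma_1\alpha_2}{8}\Bigr) + \tfrac{\gamma_1\alpha_2}{12}\cos(2\tau) + \Bigl(\tfrac{\gamma_2}{2}+\tfrac{\gamma_1\alpha_2}{24}\Bigr)\cos(4\tau),
\]
from which the Fourier coefficients $G_{k,n}$ needed by \ref{B1}--\ref{B23} are read off.

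Next I would assemble the tongue lengths and shapes. For $N=1$, \ref{B1}--\ref{B2} give $\Lambda_1^+(1)-\Lambda_1^-(1)=-\gamma_1$ and $\Lambda_2^+(1)-\Lambda_2^-(1)=-\gamma_1\alpha_2/12$, and since $\Omega_1=0$ the formula \ref{beta} immediately produces $L_1(q)=-\gamma_1(q+\tfrac{\alpha_2}{12}q^2)+o(q^2)$; moreover $B_1^\pm(1)=\mp\gamma_1/2$ have opposite signs whenever $\gamma_1\neq 0$, so the line $\beta=1$ lies strictly inside the tongue, forcing the trumpet shape. For $N=2$ the first-order term vanishes because $G_{0,1}=0$, and \ref{B22} gives $\Lambda_2^+(2)-\Lambda_2^-(2)=-G_{2,2}+\tfrac{G_{1,1}^2}{8}=\tfrac{\gamma_1^2}{8}-\tfrac{\gamma_1\alpha_2}{24}-\tfrac{\gamma_2}{2}$, which equals $B_2^+(2)-B_2^-(2)$ and yields the stated $L_2(q)$; the individual leading coefficients $B_2^\pm(2)=4\Omega_2+\Lambda_2^\pm(2)$ contain the free parameter $\Omega_2$, so both configurations $B_2^-(2)<0<B_2^+(2)$ (trumpet) and same-sign (horn) can occur. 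For $N\geq 3$, Lemma~\ref{L3} (or equivalently Theorem~\ref{duffhill}) gives $B_j^+(N)=B_j^-(N)$ for every $j<N$, so $\beta_N^+$ and $\beta_N^-$ share the same Taylor polynomial up to order $q^{N-1}$; as long as this common polynomial is non-constant, the two curves are monotone in the same direction near $q=0$ and the tongue is horn shaped, the only escape being the non-generic locus where all common coefficients $B_j^\pm(N)$ with $j<N$ vanish simultaneously.

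The main obstacle is essentially bookkeeping rather than conceptual: one must not err on signs when removing the secular term that yields $\Omega_1=0$, when assembling $G_2$ via \ref{gG}, or when inserting the Fourier coefficients into \ref{B1}--\ref{B23}. The geometric translation of the signs of $B_n^\pm(N)$ into the trumpet/horn dichotomy is then immediate from the definition of the tongue as the region bounded by $\beta_N^-(q)$ and $\beta_N^+(q)$.
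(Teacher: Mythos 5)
Your proposal is correct and follows essentially the same route as the paper: the authors likewise prove the proposition via two lemmas, first computing $\Omega_1=0$, $u_2$ (and $\Omega_2$) by Poincar\'e--Lindstedt, then reading off $G_{0,1},G_{1,1},G_{0,2},G_{1,2},G_{2,2}$ and substituting into \ref{B1}--\ref{B23} and \ref{beta} to obtain the $B_n^\pm(N)$, from whose signs the trumpet/horn dichotomy follows. Your computed values of $u_2$, $G_2$, and the resulting $L_1$, $L_2$ all agree with the paper's.
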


Although it does not geometrically  correspond to a tongue, we may consider also the case $N=0$, when the (even) periodic  eigenvalue  $\beta = \beta^+_0(q)$ forms the right  boundary of an unbounded region of instability.  In this case we have
$$\beta_0^+(q) = \left[ \frac{\gamma_1}{8} (\alpha_2- \gamma_1) -\frac{\gamma_2}{2} \right] q^2 + O(q^3),
$$
thus the  line $\beta=0$    lies or not in the instability region, at least for small values of $q$,  depending on the sign of  $B_2^+(0)= \gamma_1 (\alpha_2- \gamma_1) / 8 - 
\gamma_2/2 $.

\begin{figure}[htbp]\hspace{0.5cm}
\begin{minipage}{5cm}
\begin{center}
\scalebox{0.4}{\includegraphics{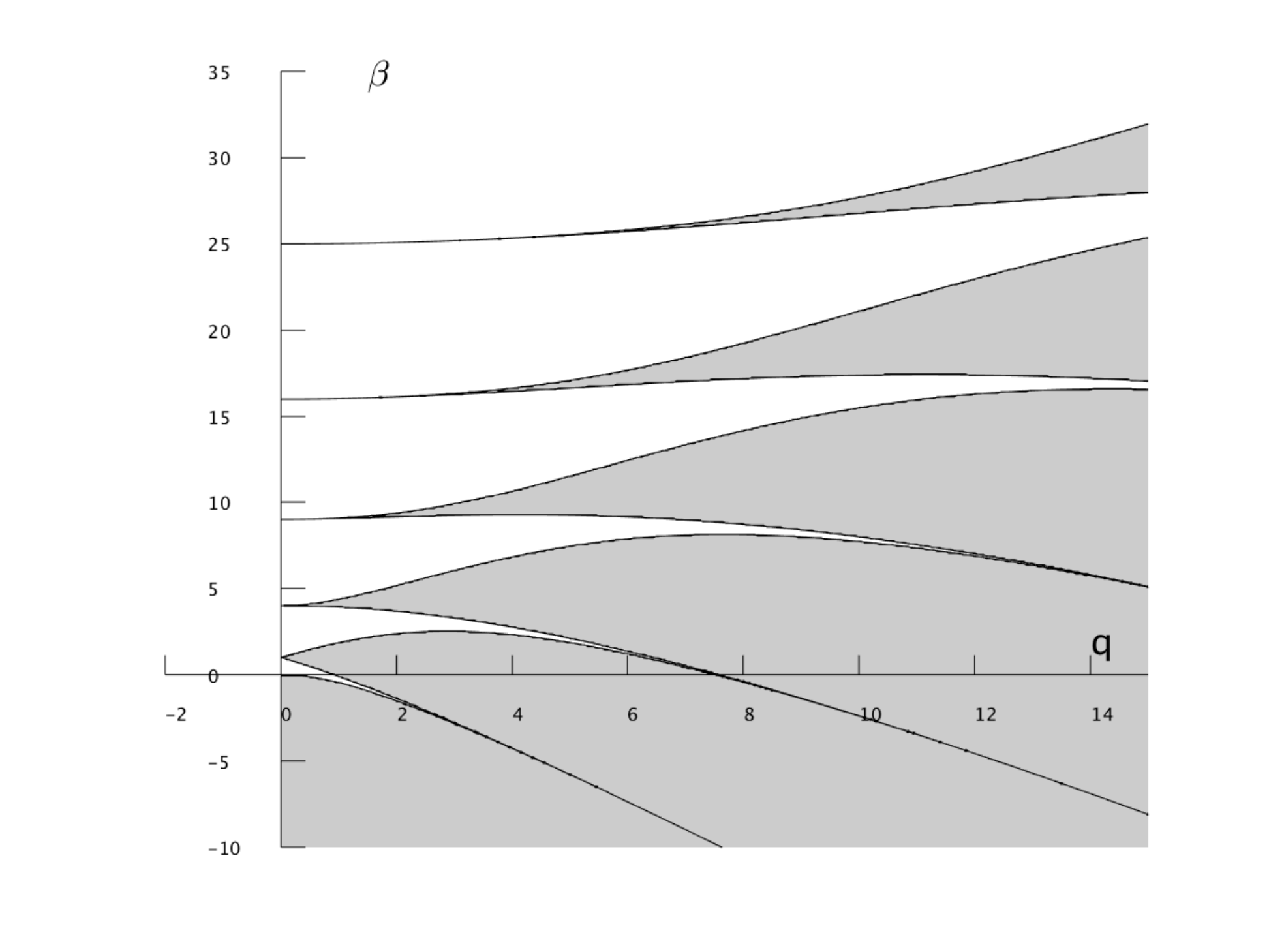}}
\end{center}
\end{minipage}
\caption{Instability tongues of Mathieu equation. The first two tongues are trumpet shaped, the others horn shaped} \label{Fig2}
\end{figure}

The proof of Proposition \ref{alpha2alpha3z} is a consequence of  the following two lemmas.  
 Let us start with direct computation of the first coefficients of $\Omega$, and    $U$ in \ref{Omega}, \ref{U}, in the case when $\alpha_2$,  $\alpha_3$ are not both vanishing,  which  is   the  most interesting  for applications.  
 
\begin{lemma}
\label{alpha2alpha3}
From the first recurrent equations
\ref{uno}, \ref{due}, \ref{tre}, we have the following expressions,
\begin{align}
\label{10} &  \Omega_1=0, \qquad u_2(\tau)=\alpha_2 \left(-\frac{1}{8}+\frac{1}{12}\cos 2\tau+\frac{1}{24}\cos(4\tau)\right),      \\
 \label{20}    &  \Omega_2=-\frac{5}{96} \alpha_2^2+\frac{3}{16} \alpha_3. 
\end{align}

\end{lemma}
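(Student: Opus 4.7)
The plan is to carry out the Poincaré--Lindstedt computation directly, using the recursive equations \ref{uno}--\ref{tre} and the initial conditions \ref{u_n}. The only subtlety is the bookkeeping of products of cosines when extracting the resonant $\cos(2\tau)$ coefficient.

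First, from \ref{uno} together with $u_1(0)=1$, $u'_1(0)=0$ we immediately obtain $u_1(\tau)=\cos(2\tau)$, so that $u_1''=-4\cos(2\tau)$ and $u_1^2=\tfrac{1}{2}+\tfrac{1}{2}\cos(4\tau)$. Substituting into \ref{due} gives
\[
u_2''+4u_2 \;=\; 4\,\Omega_1\cos(2\tau)\;-\;\tfrac{\alpha_2}{2}\;-\;\tfrac{\alpha_2}{2}\cos(4\tau).
\]
The resonance-removing condition (vanishing of the coefficient of $\cos(2\tau)$ on the right-hand side) forces $\Omega_1=0$. The resulting equation has a particular $\pi$-periodic solution $-\tfrac{\alpha_2}{8}+\tfrac{\alpha_2}{24}\cos(4\tau)$; adding the homogeneous part $C_1\cos(2\tau)+C_2\sin(2\tau)$ and imposing $u_2(0)=u_2'(0)=0$ gives $C_2=0$ and $C_1=\tfrac{\alpha_2}{12}$, which is exactly \ref{10}.

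For $\Omega_2$, I would turn to equation \ref{tre}. Since $\Omega_1=0$, the right-hand side becomes
\[
F_3(\tau)\;=\;4\,\Omega_2\cos(2\tau)\;-\;2\alpha_2\,u_1\,u_2\;-\;\alpha_3\,u_1^3.
\]
Using the already computed $u_2$, I would expand $u_1 u_2$ via the product-to-sum identity $\cos(2\tau)\cos(2k\tau)=\tfrac12(\cos(2(k-1)\tau)+\cos(2(k+1)\tau))$ to collect the coefficient of $\cos(2\tau)$: the contributions from the three terms of $u_2$ are $-\tfrac{\alpha_2}{8}\cdot 1$, $\tfrac{\alpha_2}{12}\cdot\tfrac{1}{2}$ and $\tfrac{\alpha_2}{24}\cdot\tfrac{1}{2}$ (from the $\cos(4\tau)$--$\cos(2\tau)$ product), giving a total $\cos(2\tau)$--coefficient in $2\alpha_2 u_1u_2$ equal to $-\tfrac{5\alpha_2^2}{24}$. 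Similarly, $u_1^3=\cos^3(2\tau)=\tfrac{3}{4}\cos(2\tau)+\tfrac{1}{4}\cos(6\tau)$, so the $\cos(2\tau)$--coefficient of $\alpha_3 u_1^3$ is $\tfrac{3\alpha_3}{4}$. The secular condition $\int_0^\pi F_3(\tau)\cos(2\tau)\,\mathrm{d}\tau=0$ then reads
\[
4\,\Omega_2\;+\;\tfrac{5\alpha_2^2}{24}\;-\;\tfrac{3\alpha_3}{4}\;=\;0,
\]
which yields \ref{20}.

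There is no real obstacle here; the entire proof reduces to two short ODE computations and one trigonometric expansion, both completely algorithmic once Proposition \ref{prop1} guarantees that only finitely many harmonics appear at each order. The only place where care is needed is in keeping track of the cross term $\cos(2\tau)\cos(4\tau)$ inside $2\alpha_2 u_1 u_2$, since this is what produces the $-\tfrac{5\alpha_2^2}{24}$ contribution rather than the naive $-\tfrac{\alpha_2^2}{4}$ one might write by ignoring it.
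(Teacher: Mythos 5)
Your approach is exactly the paper's: substitute $u_1=\cos(2\tau)$ into \ref{due} and \ref{tre}, kill the resonant $\cos(2\tau)$ harmonic to get $\Omega_1$ and $\Omega_2$, and solve the second-order problem with $u_2(0)=u_2'(0)=0$ to get $u_2$. The final formulas you reach (including the coefficient $-\tfrac{5\alpha_2^2}{24}$ and the secular condition $4\Omega_2+\tfrac{5\alpha_2^2}{24}-\tfrac{3\alpha_3}{4}=0$) agree with \ref{10}--\ref{20}.

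There is, however, an internal inconsistency in your bookkeeping for the $\cos(2\tau)$-coefficient of $u_1u_2$. The product of $u_1=\cos(2\tau)$ with the middle term $\tfrac{\alpha_2}{12}\cos(2\tau)$ of $u_2$ equals $\tfrac{\alpha_2}{12}\bigl(\tfrac12+\tfrac12\cos(4\tau)\bigr)$, which contributes \emph{nothing} to the $\cos(2\tau)$ harmonic; your listed contribution $\tfrac{\alpha_2}{12}\cdot\tfrac12$ is spurious. The correct contributions are $-\tfrac{\alpha_2}{8}$ (from the constant term of $u_2$) and $\tfrac{\alpha_2}{48}$ (from the $\cos(4\tau)$ term), summing to $-\tfrac{5\alpha_2}{48}$, which after multiplication by $2\alpha_2$ gives the $-\tfrac{5\alpha_2^2}{24}$ you state. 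As written, your three contributions sum to $-\tfrac{\alpha_2}{16}$ and would give $-\tfrac{\alpha_2^2}{8}$ instead, so the displayed total does not follow from the displayed summands; the slip is harmless only because you quoted the right total.
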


\begin{proof} Since  $u_1(\tau)=\cos 2\tau$, equation \ref{due} reads as
$$ u_2'' + 4 u_2'' = 4\Omega_1 \cos 2\tau - \frac{\alpha_2}{2} -  \frac{\alpha_2}{2} \cos 4\tau,$$
thus elimination of the resonant term, and an easy  check yields formula \ref{10}.
Then equation \ref{tre}, after substitution,  becomes
$$   u_3'' +  4u_3= \left(4 \Omega_2 + \frac{5}{24} \alpha_2^2 - \frac{3}{4} \alpha_3 \right)  \cos(2\tau) -  \frac{\alpha_2^2}{12}     - \frac{\alpha_2^2}{12} \cos(4\tau)  - \left(\frac{\alpha_3}{4} + \frac{\alpha_2^2}{24}\right)   \cos(6\tau),  $$
and if one  removes  the resonant term, will get   formula \ref{20}.
\end{proof}

Next from   equation \ref{Zold}, we compute the approximation of the tongues, up to  second power in $ q $. This approximation is significant if $\gamma_1 $, $ \gamma_2 $ are not both vanishing.

\begin{lemma}
The first two coefficients in the expansion \ref{betan} have the following expressions,
\begin{eqnarray*}  B^{\pm}_{1}(1) & =& \mp  \frac{1}{2}\, \gamma_1, \qquad  B^{\pm}_{1}(N)\; =\; 0, \quad \text{ for }  N>1 \; \text{ or } N=0,  \\
 B^{\pm}_{2}(1) &= & \Omega_2+\frac{1}{8}\gamma_1\alpha_2-\frac{1}{2}\gamma_2 -\frac{1}{32}\gamma_1^2  \mp \frac{1}{24} \gamma_1\alpha_2, \\
 B^{\pm}_{2}(2) &=&  4 \Omega_2 + \frac{1}{8}\gamma_1\alpha_2 -\frac{1}{2}\gamma_2 + \frac{1}{24}\gamma_1^2 \mp \frac{1}{48} \left(
\gamma_1\alpha_2 -  3 \gamma_1^2  +  12 \gamma_2  \right), \\
 B^{\pm}_{2}(N) &=&  N^2 \Omega_2 + \frac{1}{8}\gamma_1\alpha_2 -\frac{1}{2}\gamma_2 + \frac{1}{8(N^2-1)}\gamma_1^2,  \quad \text{ for }  N>2 \text{ or } N=0 .
 \end{eqnarray*}

\end{lemma}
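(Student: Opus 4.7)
The plan is to treat this as a book-keeping exercise that assembles ingredients already established in the paper. From $\lambda(q) = \beta(q)/\Omega(q)$ in \ref{Gbeta}, one has $\beta_N^\pm(q) = \Omega(q)\,\lambda_N^\pm(q)$, which, together with $\Lambda_0^\pm(N)=N^2$ and the series for $\Omega$ and $\lambda_N^\pm$, gives the convolution formula \ref{beta}, namely $B_n^\pm(N)=\sum_{j=0}^n \Lambda_j^\pm(N)\,\Omega_{n-j}$. Using $\Omega_0=1$ and $\Omega_1=0$ (from the preceding Lemma on the Poincar\'e--Lindstedt expansion) this collapses to
\begin{equation*}
B_1^\pm(N)=\Lambda_1^\pm(N),\qquad B_2^\pm(N)=N^2\Omega_2+\Lambda_2^\pm(N).
\end{equation*}

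So the task reduces to computing $\Lambda_1^\pm(N)$ and $\Lambda_2^\pm(N)$ via the explicit formulas \ref{B1}, \ref{B2}, \ref{B22}, \ref{B23}, which in turn requires knowing the Fourier coefficients $G_{k,1}$, $G_{k,2}$ of the periodic coefficient $G(\tau,q)$. The plan is to compute $G_1$ and $G_2$ directly from Lemma \ref{L4}: since $\kappa_0=1$, $\kappa_1=-\Omega_1=0$ and $g_0\equiv 0$, formula \ref{gG} simplifies to $G_1=g_1$ and $G_2=g_2$. From \ref{gn} we get $g_1(\tau)=\gamma_1 u_1(\tau)=\gamma_1\cos(2\tau)$, and substituting the explicit $u_2(\tau)$ from \ref{10} together with $u_1^2=\tfrac12+\tfrac12\cos(4\tau)$ yields
\begin{equation*}
g_2(\tau)=\gamma_1\alpha_2\Bigl(-\tfrac18+\tfrac1{12}\cos 2\tau+\tfrac1{24}\cos 4\tau\Bigr)+\gamma_2\Bigl(\tfrac12+\tfrac12\cos 4\tau\Bigr).
\end{equation*}
Reading off the Fourier coefficients gives $G_{0,1}=0$, $G_{1,1}=\gamma_1$ and
\begin{equation*}
G_{0,2}=-\tfrac{\gamma_1\alpha_2}{8}+\tfrac{\gamma_2}{2},\quad G_{1,2}=\tfrac{\gamma_1\alpha_2}{12},\quad G_{2,2}=\tfrac{\gamma_1\alpha_2}{24}+\tfrac{\gamma_2}{2}.
\end{equation*}

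Plugging $G_{0,1}=0$, $G_{1,1}=\gamma_1$ into \ref{B1} immediately gives $B_1^\pm(1)=\mp\tfrac12\gamma_1$ and $B_1^\pm(N)=0$ for $N=0$ or $N\geq 2$. Plugging the coefficients of $G_2$ into \ref{B2}, \ref{B22}, \ref{B23} and adding $N^2\Omega_2$ yields the three claimed expressions for $B_2^\pm(N)$ after straightforward simplification: for $N=1$ the symmetric part contributes $\Omega_2+\tfrac{\gamma_1\alpha_2}{8}-\tfrac{\gamma_2}{2}-\tfrac{\gamma_1^2}{32}$ and the antisymmetric part gives $\mp\tfrac{\gamma_1\alpha_2}{24}$; for $N=2$ one collects the $\pm$ terms $\mp\tfrac1{48}(\gamma_1\alpha_2-3\gamma_1^2+12\gamma_2)$; and for $N\geq 3$ (or $N=0$) only the $G_{0,2}$ and $G_{1,1}^2$ contributions survive, giving the denominator $8(N^2-1)$.

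The verification is essentially algebraic and linear, so there is no genuine obstacle; the only point requiring care is consistency of signs and of the convention used in \ref{beta} (in particular treating $\Lambda_0^\pm(N)=N^2$ as the zeroth term of the convolution), and the observation that $\kappa_1=0$ is what prevents the cross term $g_1\kappa_1$ from contaminating $G_2$. Once these conventions are fixed, the lemma follows by direct substitution.
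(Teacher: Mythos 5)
Your proposal is correct and follows essentially the same route as the paper: reduce to $B_1^\pm(N)=\Lambda_1^\pm(N)$ and $B_2^\pm(N)=N^2\Omega_2+\Lambda_2^\pm(N)$ via $\Omega_0=1$, $\Omega_1=0$, compute $G_1=g_1$ and $G_2=g_2$ from Lemma \ref{L4} with $\kappa_1=0$, read off the Fourier coefficients $G_{k,1}$, $G_{k,2}$, and substitute into \ref{B1}--\ref{B23}. The intermediate Fourier coefficients you obtain agree with those in the paper, so no further comment is needed.
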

\begin{proof}
We go back to \ref{Zold} and observe that the first terms of $G(\tau,q)$ in \ref{gG} are given by
\begin{equation}
\label{g12}
G_1(\tau)=g_1(\tau)=\gamma_1 u_1(\tau),\quad
G_2(\tau)=g_2(\tau)=\gamma_1 u_2(\tau)+\gamma_2 u_1^2(\tau)
\end{equation}
being $\kappa_0=1$, $\kappa_1=0$ in \ref{gG}, and  $g_1(\tau)$,  $g_2(\tau)$ as in \ref{gn}.

Then we insert $u_1(\tau)=\cos(2\tau)$ and $u_2(\tau)$ as in in \ref{g12} of Proposition \ref{alpha2alpha3}, and obtain the following coefficients
\begin{eqnarray*}
G_{0,1}&=&0, \quad G_{1,1} = \gamma_1,\\
G_{0,2}&=&-\frac{1}{8}\gamma_1 \alpha_2+\frac{1}{2}\gamma_2,\quad G_{1,2} = \frac{1}{12}\gamma_1 \alpha_2,
\quad G_{2,2} = \frac{1}{24}\gamma_1 \alpha_2 + \frac{1}{2}\gamma_2.
\end{eqnarray*}

Finally, since  $\Lambda_0^\pm (N)=N^2$, $\Omega_0=1$, $\Omega_1=0$, we have in \ref{beta}
$$B_1^\pm(N)=\Lambda_1^\pm(N), \quad B_2^\pm(N)=\Lambda_2^\pm(N)+N^2 \Omega_2,$$
and by simple substitutions in \ref{B2}, \ref{B22}, \ref{B23}, we have the assertion.
\end{proof}

One may wonder if there exists some universal  upper bound for the number of trumpet shaped tongues. 
In the following proposition we provide a negative answer, by showing that, with a suitable choice of the functions $f$, $g$,    the number of
trumpet shaped tongues can be arbitrarily large.  

\begin{proposition}
\label{trombettine}
Let $K\geq 1 $ be an odd integer, and let   $\alpha_{K+1}$, $\gamma_{K}$ be the first non-vanishing coefficients in the power series expansion of $f$ and $g$ respectively.
Then the tongues corresponding to odd $N$, for $1\leq N\leq K$, are trumpet shaped, and their order of tangency at $q=0$  is exactly $K$.
\end{proposition}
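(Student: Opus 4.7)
My plan is to apply Theorem \ref{hilldirect} and the formulas derived in the proof of Proposition \ref{Ogrande2}, after establishing a strong vanishing statement for the Poincar\'e--Lindstedt expansion of equation \ref{duff1}. First, I would show by induction on $n$ that, under the hypothesis $\alpha_2=\cdots=\alpha_K=0$, one has $u_n\equiv 0$ for $2\leq n\leq K$ and $\Omega_j=0$ for $1\leq j\leq K$. At each step $n$ with $2\leq n\leq K$ (a possibly empty range when $K=1$), all non-resonant contributions to $F_n$ in \ref{n} vanish by the inductive hypothesis and the assumption on $f$, so the equation reduces to $u_n''+4u_n = 4\Omega_{n-1}\cos(2\tau)$, and resonance removal forces $\Omega_{n-1}=0$, then $u_n\equiv 0$. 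The crucial additional step is at level $n=K+1$: the only surviving non-resonant contribution is $-\alpha_{K+1}\cos^{K+1}(2\tau)$, and since $K+1$ is even this expression involves only even harmonics $\cos(2m\tau)$ and in particular has no $\cos(2\tau)$ component, so resonance removal yields $\Omega_K=0$.

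Next, I would compute the coefficients $G_n$ of the transformed Hill equation \ref{Zold} via formulas \ref{gn} and \ref{gG}, using $\gamma_k=0$ for $k<K$. This gives $G_n\equiv 0$ for $n<K$ and $G_K(\tau)=\gamma_K\cos^K(2\tau)$. Since $K$ is odd, Fourier expansion of $\cos^K(2\tau)$ yields only odd harmonics $\cos(2m\tau)$ with $m\in\{1,3,\dots,K\}$; in particular
\[
G_{0,K}=0, \qquad G_{N,K}=\frac{\gamma_K}{2^{K-1}}\binom{K}{(K-N)/2}\neq 0
\]
for every odd $N$ with $1\leq N\leq K$.

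To conclude, I would invoke formula \ref{lambdaNK} to obtain $\Lambda_K^\pm(N)=\mp\tfrac{1}{2}G_{N,K}$, and plug this into \ref{beta} together with $\Omega_j=0$ for $j\leq K$ and $\Lambda_i^\pm(N)=0$ for $i<K$ (cf. Remark \ref{rem2}). This yields $B_j^\pm(N)=0$ for $1\leq j\leq K-1$ and $B_K^\pm(N)=\mp\tfrac{1}{2}G_{N,K}$, so
\[
\beta^\pm_N(q)=N^2\mp\tfrac{1}{2}\,G_{N,K}\,q^K+O(q^{K+1}).
\]
Since $G_{N,K}\neq 0$, the two curves $\beta=\beta_N^\pm(q)$ lie on strictly opposite sides of the horizontal line $\beta=N^2$ for small $q$, so the tongue is trumpet shaped; moreover $L_N(q)=|G_{N,K}|\,q^K+O(q^{K+1})$, so the order of tangency is exactly $K$.

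The delicate point is the parity argument that produces $\Omega_K=0$: without it, both branches $\beta^\pm_N$ would share a common summand $N^2\Omega_K q^K$ at the leading order, which for non-vanishing $\Omega_K$ could shift them to the same side of $\beta=N^2$ and destroy the trumpet conclusion.
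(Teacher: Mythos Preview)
Your proposal is correct and follows essentially the same route as the paper: the same inductive vanishing $u_n\equiv 0$, $\Omega_j=0$ for $j\leq K-1$, the same parity argument at level $K+1$ giving $\Omega_K=0$, and the same use of \ref{lambdaNK}, \ref{Gg}--\ref{GNK} together with $G_{0,K}=0$ to conclude $B_K^+(N)=-B_K^-(N)\neq 0$. The only cosmetic difference is that the paper treats $K=1$ separately via Proposition~\ref{alpha2alpha3z}, whereas your induction handles it uniformly.
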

\begin{proof}
For $K=1$ the statement follows from Proposition \ref{alpha2alpha3z}. 
Let us consider $K\geq 3$. We claim that   in the power series \ref{Omega} of $\Omega(q)$,  
we have $\Omega_j=0$ for $1\leq j \leq K$.

 Since $\alpha_j=0$, and  for  $2\leq j\leq  K$, by  a simple inductive argument applied to the recursive equations \ref{n}, we have  that $u_j =0$ for $1\leq j\leq K$, and $\Omega_j =0$ for $1\leq j\leq K-1$. 
 
 It remains to prove that $\Omega_K=0$. 
The equation for $u_{K+1}$ reduces to
$$ u_{K+1}'' +4 u_{K+1}= 4 \Omega_K u_1 -\alpha_{K+1} u_1^{K+1},$$
and the coefficient $\Omega_K$ is computed by removing the resonance term $\cos(2t)$ in the right-hand side term.  Therefore we get
$$4 \Omega_K=\alpha_{K+1}\, \frac{2}{ \pi} \int_{-\pi/2}^{\pi/2}  u_1^{K+1}(\tau) \cos(2\tau) \, {\rm d}\tau = \alpha_{K+1}\, \frac{2}{ \pi} \int_{-\pi/2}^{\pi/2} \cos^{K+2}(2 \tau)\,{\rm d}\tau .$$

The claim is proved, since  this integral   vanishes when $K$  is odd.\footnote{We remark that for even $K$  this last integral is not vanishing, therefore $\Omega_K\neq 0$}

Now, from formula \ref{beta}, it follows that 
 $  B^\pm_{j}(N)=  \Lambda^\pm_j(N)  $, for  $1\leq j \leq K$, $N\leq K$. 
  In addition, since $G(\tau,q) = g(qU(\tau,q))/\Omega(q) = \gamma_K q^K \cos(2\tau)^K + O(q^{K+1})$, owing to Remark \ref{rem2}, we get 
 $$   B^\pm_{j}(N)=  \Lambda^\pm_j(N) =0  \qquad (1\leq j < K).$$ 
  
 On the other hand,  from formula \ref{lambdaNK} in Proposition \ref{Ogrande2}, we have 
 $$B^\pm_{K}(N)  =\Lambda^\pm_K(N) = - G_{0,K}\mp  \frac{1}{2}G_{N,K}, $$ 
 where $G_{N,K}$,  as    computed by formula   \ref{Gg} is not zero, if     $N$ has the same parity of $K$. 
 Finally, for odd $K$, we get 
 $$G_{0,K}= \frac{\gamma_K}{\pi}\int_{-\pi/2}^{\pi/2}{\cos^K(2\tau) \,{\rm d}\tau}=0. $$
 
The conclusion is that   $B_j^{\pm}(N) =0$, for $1\leq j \leq  K-1$, and  
$B^+_{K}(N)= - B^-_{K}(N)\neq 0$, which  proves the assertion.
\end{proof}


\begin{remark}
In many applications the function $g$ is proportional to the derivative of $f$, i.e. $g(x)=\tilde\gamma f'(x)$. In these cases we obviously have
$ \gamma_n=0\, \Longleftrightarrow  \,\alpha_{n+1}=0 $

 Under this assumption, Proposition \ref{trombettine} yields examples of trumpet shaped tongues with the same order of tangency.
\end{remark}

 
\section{Applications to suspension bridges and examples}
\label{examples}

In this section we come back to the problem that gave rise to  our investigations, and we illustrate a few results related to problem (II) (see introduction). 

An important issue in the mathematical modeling of suspension bridges is the phenomenon  of energy transfer from flexural to torsional
modes of vibration along the deck of the bridge. According to a recent field of research   \cite{AG,BG,Gaz,F,CAG} internal nonlinear resonances   giving rise to 
 the onset of instability may occur even when the aeroelastic coupling is disregarded. In particular, in the fish-bone bridge model  (\cite[ch. 3]{Gaz},  or   \cite{MP}),  the non-linear coupling between flexural and torsional oscillation of the bridge is described by  the function 
 $  \mathcal{F}(x)$, which represents   in the PDEs system the restoring action of the pre-stressed hangers.\footnote{In the cited works $\mathcal{F}$ is written as $f$; we changed the font  to avoid confusion}  
A first expression of such $  \mathcal{F}$  was proposed in  \cite{MCKW,Moo}:
$$
  \mathcal{F}(x) = {\rm k} \left[ (x+x_0)^+ - x_0 \right].  
$$  

 Under this assumption, the  PDEs system acts as a linear  uncoupled system for sufficiently low  energy.

Anyway, other expression of  $  \mathcal{F}$ have been proposed in \cite{MP,BFG,MP2} and some of these are nonlinear and analytical  function in a neighborhood of the origin. In that case some instability zone for low energy may be expected.

The second step in the cited papers  is to reduce the PDE-system to an ODEs one, through a Galerkin projection. If, for sake of simplicity, our aim is to study the interaction between a single torsional mode and a single flexural one (the first ones, for example),
the instability at a given energy level of a pure flexural solution is equivalent to the instability of an Hill equation like \ref{H2}.
More precisely, we are led to study a system of  two coupled equations (the linearized system around the pure flexural solution). Such ODEs system can be written in the form \ref{H2}--\ref{duff1}\footnote{ The coefficient 4 in
\ref{duff1} can always be fixed with a suitable rescaling in time.} where the function $f(x)$ in \ref{duff1}   is strictly related to the function $  \mathcal{F}$ in the PDEs model and the functions $g$ and $f$ in  \ref{H2}--\ref{duff1} satisfy $g(x)=\tilde \gamma f'(x)$, $\tilde \gamma>0$, (see \cite{BG,MP}).

Our work proves that the thickness of the instability tongues gets thinner and thinner for growing $N$, then the most significant instability zones correspond to the first tongues; 
moreover,  the parameter $\beta$ being  constant in the applications,
 the shape of the tongues is also important, because  entering deeply an instability zone  is more destructive than being near to its border.
 
Now we present some simple examples of application of Proposition \ref{alpha2alpha3z}. 

\
 
\noindent {\bf Example 1.}
Our first example is given by the following system,
 \begin{align*}
& u''(t,q)  +   4 u(t,q) + \alpha u^2(t,q)=0 ,  \qquad u(0;q)=q, \quad u'(0;q) =0,   \\
 & z''(t)      +     (\beta +2 \tilde\gamma \alpha u(t,q)) z(t) =0.
 \end{align*}

Owing to  Propositions  \ref{alpha2alpha3}, we know that the first tongue  is  trumpet shaped and length $L_1(q)=- 2\tilde\gamma \alpha q + O(q^2)$.
The second tongue is trumpet shaped if and only if
$$\tilde \gamma<-1,\quad \frac{1}{2}<\tilde \gamma<1, \quad \tilde\gamma>\frac{5}{2}.$$

We can also prove that    coexistence may occur for special  values of the parameters; precisely 
if  $ \tilde\gamma=\frac{n(n+1)}{12}$ ($n\in \N$) , then there exist only $n$ instability tongues, or equivalently there exist  $2n+1$ simple eigenvalues.

In fact, if   we set $\gamma=2 \tilde\gamma \alpha$ for sake of simplicity, and
plug $Q(t)=\gamma u(t)$ into \ref{Q}, we get
 $$u''+Au+B/\gamma+3\gamma u^2=0, $$
 which is satisfied with the choice  $A=4$, $B=0$, $\gamma=\alpha/3$. Thus the result follows by Theorem \ref{incemod}.

The following formula (see  \cite[Th. 5.3]{Vol}) shows that the simple  eigenvalues  are the lowest ones:
$$
C_N  = \frac{(-1)^N\, \alpha^N}{8^{N-1} \, ((N-1)!)^2 } \prod_{k=0}^{N-1} \left(2\tilde \gamma  - \frac{k(k+1)}{6} \right).  
$$  
 
In addition $C_N \neq 0$ for every $N$, if $\tilde \gamma$ does not take one of the values $n(n+1)/12 $.

\

\noindent {\bf Example 2.}
Our second example has been discussed  for fixed values of the parameter $\tilde\gamma$ in  \cite{GG} ($\tilde\gamma=1/3$), and  \cite{BG} ($\tilde\gamma=3$).  It is provided by the following coupled system,
 \begin{align*}
& u''(t,q)  +   4 u(t,q) +\alpha  u^3(t,q)=0 ,  \qquad u(0;q)=q, \quad u'(0;q) =0,   \\
 & z''(t)      +     (\beta +3\tilde \gamma \alpha u^2(t,q)) z(t) =0.
 \end{align*}

We observe that this second example falls within the conditions of Remark \ref{f dispari g pari}, so that the coefficient $g(u)$  has  fundamental  period $T(q) / 2$.
Thus   the genuine instability tongues    branch off from the $\beta$-axis at  $\beta_N(0) = (2N)^2$, $N\in \N$.

 The first tongue is trumpet shaped if and only if
$$\frac{1}{3}<\tilde\gamma<1.$$

Coexistence may occur for some values of the parameters; precisely 
if  $\tilde\gamma=\frac{n(n+1)}{6}$, then there exist only $n$ instability tongues (in particular if $\tilde\gamma=\frac{1}{3}$, there is only the first one).

To prove this last assertion, let us set $\gamma=3\tilde \gamma \alpha$ and
   $Q(t)=\gamma u^2(t)$,  and plug it into \ref{Q}. We obtain
 \begin{equation}
  \label{Qu}  (u')^2+u u''+\frac{A}{2}u^2+\frac{B}{2 \gamma}+\frac{3}{2}\gamma u^4=0.
   \end{equation}

The first equation multiplied by $u'$ yields the identity,
$$(u')^2+4u^2+\frac{\alpha}{2}u^4=2 E(q),$$
where  $E(q)=4q^2+ \alpha q^4/2$ is the energy of  $u$. By replacing $(u')^2$ in \ref{Qu}, we get
$$u u''+(\frac{A}{2}-4)u^2+(\frac{3}{2}\gamma-\frac{\alpha}{2})u^4+(2 E-\frac{B}{2 \gamma})=0.$$

Choosing $B=4 \gamma E$ we get rid of the constant term. Finally by setting  $A=16$, $\gamma=\alpha$, equation  \ref{Q} is satisfied.

\

 \noindent {\bf Example 3.}
In \cite{MP2} we  numerically studied the behavior of the ODEs system for some other functions. 
One of those was
$$
\tilde f(x)=mx +m\sqrt{x^2+(h/m)^2}\, \,-h =mx+\frac {m^2}{2 h} x^2+ O(x^4),
$$ 
where $m$, $h$, are positive constants. The corresponding non linear perturbations $f$ and $g$ in the linearized  system  \ref{H2}--\ref{duff1} become, after the rescaling:
$$ f(x)=\alpha x^2+O(x^4),\quad g(x)=2 \tilde \gamma \alpha x+O(x^3),$$
where $\alpha$ is a suitable positive constant.

The asymptotic behavior of the first two tongues for this choice of non-linearity is identical to the one of the first example.
Besides we have no information about the coexistence.

Looking at these examples, we can note that the role of the parameter $\tilde \gamma$ which depends on the structural constants in the PDEs model, is the most relevant for the shape of the first tongues.

Our last example about coexistence is inspired by the examples 1 and 2 and appears to be novel.

\

\noindent {\bf Example  4.}
  Let us consider the following coupled system
 \begin{align*}
& u''(t)  +   4 u(t) + f(u(t))=0 , \qquad u(0)=q, \quad u'(0) =0 , \\
 & z''(t)      +     (\beta +g(u(t))) z(t) =0,
 \end{align*}
with $f(x)= \alpha_2 x^2+\alpha_3x^3$, $g(x)=\gamma_1 x+\gamma_2 x^2$.

This system has exactly  $2n+1$ simple eigenvalues  (the first ones)
if $f$ and $g$ satisfy the following conditions:
$$f(x)= \alpha x^2+\frac{\alpha^2}{18}x^3, \quad g(x)=\frac{n(n+1)}{6} f'(x)\quad  \alpha \in \R,\, \alpha\neq 0,\, n\in \N.$$

The verification is cumbersome  but   follows the lines of the two first examples.  

\appendix

\section{Recursive formulas for the computation of $C_N$}

Our goal here is to provide a recursive formula for the computation of the leading coefficient $C_N$ in the asymptotics of $L_N(q)$.

\begin{proposition} Let us consider equation \ref{Hill1}   when $G(t,q)$ is given by   \ref{exp}--\ref{exp1}. For $0\leq p \leq N$, let the numbers $r_p(N)$ be recursively defined  by the  rule,
\begin{equation}
\label{deltaz}
r_p(N)  = - \frac{1}{8p(N-p)}\sum_{s=1}^p G_{s,s}\, r_{p-s}(N), \qquad r_0(N)=2.
\end{equation}

Then   the following formula holds true,
\begin{equation}
\label{deltaB} \Lambda_N(N)^+ -\Lambda_N(N)^-  =   - \frac{1}{2} \sum_{p=0}^{N-1} G_{N-p,N-p}\, r_{p} (N) .
\end{equation}
\end{proposition}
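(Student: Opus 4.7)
The plan is to identify $r_p(N)$ with the difference
$\delta z_{2p-N,\,p} := z_{2p-N,\,p}^{+} - z_{2p-N,\,p}^{-}$ along the critical line $k = 2n - N$ (the boundary of the region $R$ of Lemma \ref{L3}) and to derive \ref{deltaz} by taking the $\pm$-difference of the recursion \ref{ric} on this line. The whole argument is driven by the cancellation patterns already encoded in Lemmas \ref{L2}--\ref{L3}.

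First I would introduce $\delta z_{k,n} := z_{k,n}^{+} - z_{k,n}^{-}$ and $\delta\Lambda_n := \Lambda_n^{+} - \Lambda_n^{-}$, and collect the two vanishing facts from Lemma \ref{L3}: $\delta z_{k,n}=0$ whenever $k > 2n-N$, and $\delta\Lambda_n=0$ for every $n\leq N-1$. The initial data \ref{incond} give $\delta z_{k,0} = 2\,\delta_{k,-N}$, which will anchor the recursion at $p=0$.

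Next I would compute $C_N = \delta\Lambda_N$ directly from \ref{Bj}. The term $\delta z_{N+2i,\,N-s}$ satisfies $k - (2n-N) = 2(i+s) \geq 2$, so it lies strictly inside $R$ and vanishes. For $\delta z_{N-2i,\,N-s}$ one has $k - (2n-N) = 2(s-i)$, positive unless $i=s$, so only the diagonal terms $i=s$ survive. Reindexing by $p=N-s$ yields
\[
 C_N = -\frac{1}{2}\sum_{p=0}^{N-1} G_{N-p,\,N-p}\, \delta z_{2p-N,\,p},
\]
so setting $r_p(N) := \delta z_{2p-N,\,p}$ immediately gives \ref{deltaB}. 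The case $p=0$ yields $r_0(N) = \delta z_{-N,0} = 2$ from the initial data above.

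Finally, I would establish \ref{deltaz} by taking the $\pm$-difference of \ref{ric} at $(k,n) = (2p-N,\,p)$ for $1 \leq p \leq N-1$. On the left, $N^2 - k^2 = 4p(N-p) \neq 0$. On the right, split $\Lambda_s^{\pm} z_{k,n-s}^{\pm} = \delta\Lambda_s\cdot z_{k,n-s}^{+} + \Lambda_s^{-}\,\delta z_{k,n-s}$: the first piece is zero because $s \leq p < N$ and Lemma \ref{L3} gives $\delta\Lambda_s = 0$, and the second is zero because $\delta z_{2p-N,\,p-s}$ satisfies $k - (2n-N) = 2s > 0$, placing it in $R$. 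The same in-$R$ check shows that $\delta z_{(2p-N)+2i,\,p-s}$ always vanishes (since $2(i+s) > 0$) while $\delta z_{(2p-N)-2i,\,p-s}$ survives only when $i=s$. Collecting the surviving diagonal terms and dividing by $4p(N-p)$ produces exactly
\[
 r_p(N) = -\frac{1}{8p(N-p)} \sum_{s=1}^{p} G_{s,s}\, r_{p-s}(N),
\]
which is \ref{deltaz}. The only subtle step is the bookkeeping of the several "in $R$ or not'' checks; once these are organised, the derivation is essentially automatic, and the recursion is well-defined because the prefactor $4p(N-p)$ never vanishes in the range $1 \leq p \leq N-1$.
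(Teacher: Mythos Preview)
Your proof is correct and follows essentially the same route as the paper: identify $r_p(N)$ with $\Delta z_{2p-N,p}$, use Lemma~\ref{L3} to reduce both \eqref{Bj} at level $N$ and \eqref{ric} along the line $k=2n-N$ to the diagonal terms $i=s$, and read off \eqref{deltaB} and \eqref{deltaz}. Your treatment is in fact slightly more explicit than the paper's, which dispatches the $\Lambda_s z_{k,n-s}$ contribution with the phrase ``analogous considerations'' where you spell out the product splitting $\delta\Lambda_s\, z^{+}+\Lambda_s^{-}\,\delta z$ and check both pieces vanish.
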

\begin{proof}

Let us set  $\Delta z_{k,n} = z_{k,n}^+ - z_{k,n}^-$, where $ z_{k,n}^{\pm}$ are defined by \ref{fourier}. Owing to formula \ref{Bj} for $n=N$, we have
$$
\Lambda_N(N)^+ -\Lambda_N(N)^-  =   - \frac{1}{2} \sum_{s=1}^N \sum_{i=0}^s G_{i,s} \left( \Delta z_{N-2i,N-s} + \Delta z_{N+2i,N-s} \right).
$$

Thanks to Lemma \ref{L3}, the only non-vanishing terms of the right-hand side are those having index along the line $k=2n-N$ (we refer to the notations of Lemma \ref{L3}), that is $\Delta z_{N-2i,N-s}$ for $i = s$.
Therefore we get
$$ \Lambda_N(N)^+ -\Lambda_N(N)^-  =   - \frac{1}{2} \sum_{s=1}^N G_{s,s}\, \Delta z_{N-2s,N-s} .$$

By using the notation   $r_{N-s} (N) = \Delta z_{N-2s,N-s}$, and by inverting the order of summation, we get \ref{deltaB}.

As for the formula \ref{deltaz}, 
we note that $r_p(N) = \Delta z_{-N+2p,p}$, and that   the pair  $(-N+2p,p)$ lies on  the line  $k=2n-N$.
Owing to formula \ref{ric} with $k=-N+2p$, $n=N$,   with analogous considerations we get,
\begin{eqnarray*}
4p(N-p)  r_p(N) &=& 4p(N-p) \Delta z_{-N+2p,p} = - \frac{1}{2} \sum_{s=1}^p G_{s,s}\, \Delta z_{-N+2p-2s,p-s}   \\
&=& - \frac{1}{2} \sum_{s=1}^p G_{s,s}\, r_{p-s}(N).
\end{eqnarray*}

This proves the assertion since, thanks to \ref{incond}, $r_0(N)=  \Delta z_{-N,0} = z_{-N,0}^+ - z_{-N,0}^-  =2$. \end{proof}

\begin{remark}   It is clear from \ref{deltaz}--\ref{deltaB} that $ \Lambda_N(N)^+ -\Lambda_N(N)^- $ is a polynomial 
of degree $N$ in the diagonal coefficients 
$G_{j,j}$, $ 1\leq j \leq N$. 
It is not difficult (but cumbersome) to show that it takes the form
 \begin{equation}
 \label{GN}
 - G_{N,N} + P_N(G_{1,1}, \dots, G_{N-1,N-1}),
 \end{equation}
 where $P_N$ is a linear combination of 
$$ \prod_{j=1}^{N-1} G_{j,j}^{p_j} \qquad \text{with} \quad  \sum_{j=1}^{N-1}  j p_j = N.   $$

In particular, 
the   monomial of degree $N$ is given by 
$$   \frac{(-1)^N}{((N-1)!)^2\, 8^{N-1}}\,G_{1,1}^N , $$
in accordance with the known asymptotic expansion of the Mathieu equation \cite{LK}. 
\end{remark} 

Let us now consider equation \ref{H2}.    In order to compute $G(\tau,q)= g(qU(\tau,q))/\Omega(q)$, we have to go back to Section 3, and look at the expansion \ref{Gexp}, whose coefficients are given by    \ref{gn}--\ref{gG}. 

We need a notation: 
given any trigonometrical polynomial $F(\tau)$,  let $P_{2n}[F]$ be its  $\cos(2n \tau)$-coefficient, i.e $P_{2n}[F] = 1/\pi \, \int_{-\pi}^\pi F(\tau) \cos (2n\tau) {\rm d}\tau$.
Owing to formula \ref{gG} (recall that $\kappa_{0}=1$) we have that 
$$G_{n,n} = P_{2n}[G_n] = P_{2n}[g_n].$$  

\begin{proposition}
Under the assumptions of Theorem \ref{duffhill}, let us consider the expansion \ref{expansion} in Section 3. Let  us set
$ \displaystyle  A_n = \frac{1}{2}P_{2n}[u_{n}]  $ ($n\geq1$),
and define the generating functions,
$$ \psi(q) = \sum_{n=1}^\infty A_n q^n, \qquad \Psi(q) = \frac{1}{2} \sum_{n=1}^\infty G_{n,n} q^n.$$

Then $\psi(q)$ solves the differential equation
\begin{equation}
\label{genode} q^2 \psi''(q) + q\psi'(q) - \psi(q) = \frac{1}{4} f(\psi(q)),
\end{equation}
with the initial conditions $\psi(0)=0$, $\psi'(0) = \frac{1}{2}$. In addition, we have
\begin{equation}
\label{Grec} \Psi(q) = g(\psi(q)).
\end{equation}
\end{proposition}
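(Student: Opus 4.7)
The plan is to extract the highest-frequency Fourier coefficient $a_n := P_{2n}[u_n]$ from each side of the recursive equation \ref{n}, and then repackage the resulting scalar recursion as the ODE \ref{genode} via a generating function.

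First I would observe that since $u_m$ has degree $2m$ as a trigonometric polynomial (Proposition \ref{prop1}), neither $u_{n-k}''$ nor any product $u_{i_1}\cdots u_{i_k}$ with $i_1+\cdots+i_k<n$ can contribute to $P_{2n}$. In particular, the secular corrections $\Omega_k u_{n-k}''$ drop out entirely, because $k\geq 1$ forces the frequency strictly below $2n$. The only surviving contribution on the right-hand side of \ref{n} comes from products with $i_1+\cdots+i_k=n$, and the elementary identity
\[
\cos(2i_1\tau)\cdots\cos(2i_k\tau) = \frac{1}{2^{k-1}}\cos(2(i_1+\cdots+i_k)\tau) + (\text{lower frequencies})
\]
yields $P_{2n}[u_{i_1}\cdots u_{i_k}] = 2^{1-k}\,a_{i_1}\cdots a_{i_k}$. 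Together with $P_{2n}[u_n''+4u_n]=(4-4n^2)a_n$, the $n$-th recursion projects onto
\[
4(n^2-1)a_n = \sum_{k=2}^n \frac{\alpha_k}{2^{k-1}}\sum_{i_1+\cdots+i_k=n} a_{i_1}\cdots a_{i_k}.
\]

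Next I would set $A_n=a_n/2$, so that the prefactor $2^{1-k}$ on the right is exactly absorbed by $a_{i_1}\cdots a_{i_k}=2^k A_{i_1}\cdots A_{i_k}$, leaving the clean recursion $4(n^2-1)A_n = \sum_{k=2}^n \alpha_k \sum_{i_1+\cdots+i_k=n} A_{i_1}\cdots A_{i_k}$. Multiplying by $q^n$ and summing over $n\geq 1$, the left-hand side becomes $4[q^2\psi''(q)+q\psi'(q)-\psi(q)]$ while the right-hand side is $\sum_{k\geq 2}\alpha_k\,\psi(q)^k = f(\psi(q))$; this delivers \ref{genode}. The initial conditions are immediate: $\psi(0)=0$ because the series starts at $n=1$, and $\psi'(0)=A_1=\tfrac{1}{2}P_2[\cos 2\tau]=\tfrac{1}{2}$.

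For the identity $\Psi(q)=g(\psi(q))$, I would first invoke \ref{gG}: since $g_j$ has degree at most $2j<2n$ for $j<n$, only the term $j=n$ survives the projection $P_{2n}$, so $G_{n,n}=P_{2n}[g_n]$. Applying the same product-of-cosines bookkeeping to \ref{gn} gives $\tfrac{1}{2}G_{n,n}=\sum_{k=1}^n \gamma_k \sum_{h_1+\cdots+h_k=n} A_{h_1}\cdots A_{h_k}$, and summing in $q$ collapses this to $\sum_{k\geq 1}\gamma_k\,\psi(q)^k=g(\psi(q))$. The main subtlety throughout is the combinatorial factor $1/2^{k-1}$ coming from the top-frequency part of a $k$-fold product of cosines; the normalization $A_n=\tfrac12 P_{2n}[u_n]$ is precisely engineered to absorb it, after which both assertions follow by matching coefficients of $q^n$.
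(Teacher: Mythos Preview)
Your proof is correct and follows essentially the same approach as the paper. The only cosmetic difference is that the paper sets $\zeta=e^{2i\tau}$ and tracks the coefficient of $\zeta^n+\zeta^{-n}$, which automatically absorbs the factor $2^{1-k}$ you compute explicitly (since the top term of a $k$-fold product $(\zeta^{i_1}+\zeta^{-i_1})\cdots(\zeta^{i_k}+\zeta^{-i_k})$ is simply $\zeta^n+\zeta^{-n}$ with coefficient $1$); your choice to work with $P_{2n}$ and then renormalize via $A_n=\tfrac12 P_{2n}[u_n]$ achieves exactly the same cancellation.
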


The introduction of the generating functions is just for compactness of notations.
The differential equation \ref{genode}, and formula \ref{Grec} are equivalent to the following recursive formulas:
\begin{equation}
\label{Arec}
A_1=\frac{1}{2}, \qquad 4(n^2 -1)A_n  = \sum_{m=2}^n \alpha_m \sum_{h_1+\dots+h_m=n} A_{h_1}\cdots   A_{h_m} \qquad (n \geq 2),
\end{equation}
\begin{equation}
\label{Grecrec}
\frac{1}{2} G_{n,n} = \sum_{m=1}^n \gamma_m \sum_{h_1+\dots+h_m=n} A_{i_1}   \cdots  A_{i_m} .
\end{equation}

\begin{proof}
Let us set $\zeta = e^{2i\tau}$.
By definition of $A_n$, we have
$$ u_n = A_n (\zeta^n + \zeta^{-n}) +  \rm{l.o.t.} $$
where by $\rm{l.o.t.}$ we denote powers of $\zeta$ with modulus less than $n$. By plugging this expansion into the recursive  equation \ref{n}, we get
\begin{eqnarray*}
   4(  n^2 -1)A_n (\zeta^n + \zeta^{-n})  & = &    \sum_{k=2}^{n} \alpha_{k} \sum_{i_1 +\dots+ i_{k}=n}A_{i_1} (\zeta^{h_1} + \zeta^{-h_1}) \cdots   A_{i_1} (\zeta^{h_k} + \zeta^{-h_k}) + \rm{l.o.t.} \\
 & & =  \sum_{k=2}^{n} \alpha_{k} \sum_{i_1 +\dots+ i_{k}=n}A_{i_1}   \cdots  A_{i_k} (\zeta^n + \zeta^{-n}) + \rm{l.o.t.} \end{eqnarray*}

 Neglecting the $\rm{l.o.t.}$, we obtain formula \ref{Arec} for $n\geq 2$.
 Multiplying \ref{Arec} by $q^n$ and summing up, we obtain formula \ref{genode} since
 $$ \sum_{n=2}^\infty  ( n^2 -1)A_nq^n = q^2\psi''(q) + q\psi'(q) - \psi(q) .$$

 Let us now consider the coefficient $G_{n,n} =P_{2n}[g_n]$, where $g_n$ is given by formula \ref{gn}. Proceeding as before, we have
$$ \frac{1}{2} G_{n,n} (\zeta^n + \zeta^{-n})= \sum_{m=1}^n \gamma_m \sum_{h_1+\dots+h_m=n} A_{i_1}   \cdots  A_{i_m} (\zeta^n + \zeta^{-n}) + \rm{l.o.t.} $$
which yields formula \ref{Grecrec}
\end{proof}

In the simplest  non-trivial example,   $f(x)= \alpha x^2$, $g(x)= x$, we have
\begin{equation}
\label{Glame} G_{n,n} =  \frac{n}{8^{n-1}} \, \left(\frac{\alpha}{6}\right)^{n-1},
\end{equation}
as we may directly verify from \ref{Arec}--\ref{Grecrec} which reduce to
$$G_1=1, \qquad  (n^2 -1)G_{n,n} = \frac{\alpha}{8}\,   \sum_{j=1}^n G_{j,j}G_{n-j,n-j} \qquad (n \geq 2). $$

In fact upon substitution \ref{Glame}, and simplification, we obtain the well-known identity, 
$$ \frac{(n^2-1)n}{6}  =  \sum_{j=1}^n j(n-j) . $$


\section{The forms of  the Ince theorem}

 We think that it could be useful for the reader to have some general information about the classical  Lam\'e equation and the Ince theorem. 
First of all  the Lam\'e equation has five different forms, and this  can be a bit confusing: we have the ``Jacobian" form and the ``Weierstrassian" form, 
that are   Hill equations, 
 two algebraic forms, and the trigonometric form which is  of Ince's type. Here we present the first two versions.  
 
The Jacobian form is given by the following equation, 
\begin{equation}
\label{LJ}
 y''(x) + (\lambda-n(n+1)\, k^2\,\textrm{sn}^2(x))y(x)=0,
\end{equation}
where $\textrm{sn}(x)$ is  the  Jacoby elliptic sine function of modulus $k^2$, and  $n\in \R$ (see e.g. \cite[§ 7.3]{MW}).

The  Weierstrassian form is 
$$
w''(z)+(\beta-n(n+1) \wp(z))w(z)=0  \quad ( z\in \mathbb{C}), 
$$ 
 where 
the Weierstrass function $ \wp(z)= \wp(z; g_2,g_3)$  has  a double pole in $ z=0$, and solves  the following differential equation,  \begin{equation}
\label{P}
(P')^2=4P^3-g_2P-g_3=4(P-e_1)(P-e_2)(P-e_3). \end{equation} %

 Under the assumption that both the invariant $g_2$, $g_3$ and the roots $e_i$ are    real, with $e_3<e_2<e_1$,  $ \wp(z)$ has two semi-periods: $\omega=\omega_1$ which is real, and  $\omega'=\omega_3$, which is pure imaginary (another symbolism that emphasizes the periods is $ \wp(z)= \wp(z| \omega,\, \omega')$).
A complete description of elliptic functions and their properties can be found in \cite{AS1,WW}. 

Anyway, if we are interested only in real solution of \ref{P}, its general integral is given by $\wp(t+\omega_3+c)$, where $\omega_3\in i\R$, $c\in \R$, and the Weierstrassian form of the Hill equation becomes,  
\begin{equation}
\label{LWR}
w''(t)+(\beta-n(n+1) \wp(t+\omega_3))w(t)=0 \quad (t\in \R).
\end{equation} %

 In \cite[ch. XXII, § 23.4]{WW} (also the formulas in  \cite[§ 18.9]{AS1} can be helpful) we can find how to transform  equation  \ref{LWR} into \ref{LJ}. 
  The simplest identity that shows the connection between the two forms  is, 
\begin{equation*}
 \wp(t+\omega_3)=e_3+(e_2-e_3)\textrm{sn}^2(\sqrt{e_1-e_3}\,t); 
 \end{equation*} 
then, with the rescaling $x=\sqrt{e_1-e_3}\,t$, it is easy to pass from \ref{LWR} to  \ref{LJ}, being $k^2=\frac{e_2-e_3}{e_1-e_3}$ exactly the modulus of $\textrm{sn}(x)$. 

The classical Ince theorem, with the Lam\'e equation in Jacobian form, is presented in \cite{MW} and its proof uses the equivalence between the Jacobian and trigonometrical forms of this equation 
(we can find also the substitutions that transform a form into another one, with the exception of  the Weierstrassian form,  in \cite[§9.1]{Ar}). 
The alternative version of the Ince theorem in Weierstrassian form is widely cited (see for example \cite{GW} ) and has its merits:

 \begin{theorem}
\label{InceW}
Let $\wp(t)  = \wp(t|\omega_1,\omega_3)$ be the elliptic Weierstrass function  with periods $\omega_1\in \R$, $\omega_3\in i\R$, and let
\begin{equation}
\label{ince}
\tilde{Q}(t)  = -n(n+1) \wp(t+\omega_3 +c), \qquad c\in \R .
\end{equation}
be the Lam\'e--Ince potentials.

Then, for every positive integer $n$, the Hill equation
 $$ w'' + (\lambda +  \tilde{Q}) w=0$$
has   exactly      $n+1$ instability intervals, including the unbounded one.
\end{theorem}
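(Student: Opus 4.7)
The plan is to reduce Theorem \ref{InceW} directly to the Jacobian-style version already established in Theorem \ref{incemod}. The key observation is that, up to a factor of $-2$, the shifted Weierstrass function $\wp(t+\omega_3+c)$ is exactly the type of real periodic coefficient $Q$ admitted by Theorem \ref{incemod}. Concretely, I would set
$$ Q(t) := -2\wp(t+\omega_3+c), $$
so that $\tilde Q(t) = \frac{n(n+1)}{2} Q(t)$ and the Hill equation of Theorem \ref{InceW} takes exactly the form addressed by Theorem \ref{incemod}.

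Next I would verify that $Q$ satisfies a differential equation of the form $Q''+AQ+B+3Q^2=0$. Differentiating \ref{P} once with respect to $t$ yields $\wp''=6\wp^2-g_2/2$, hence
$$ Q''=-2\wp''=-12\wp^2+g_2, \qquad 3Q^2=12\wp^2, $$
so $Q''+3Q^2=g_2$. This is the desired identity with $A=0$ and $B=-g_2$. The discriminant condition then becomes $A^2-12B=12g_2>0$, which I would check using the factorization in \ref{P} together with the normalization $e_1+e_2+e_3=0$ forced by the vanishing of the $P^2$ coefficient: squaring this sum gives $e_1^2+e_2^2+e_3^2+2(e_1e_2+e_2e_3+e_3e_1)=0$, so
$$ g_2 = -4(e_1 e_2 + e_2 e_3 + e_3 e_1) = 2(e_1^2+e_2^2+e_3^2) > 0, $$
the strict inequality using the fact that the $e_i$ are distinct (in particular not all zero).

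The remaining requirements of Theorem \ref{incemod} are that $Q$ be real, non-constant, and periodic. These follow from the standard properties recalled just before the theorem: for $\omega_3\in i\R$ the translate $\wp(t+\omega_3+c)$ is real-analytic on $\R$ with real period $2\omega_1$, and it is non-constant because $\wp$ itself is non-constant. With this data in hand, Theorem \ref{incemod} applies and yields exactly $n+1$ instability intervals (including the unbounded one), proving Theorem \ref{InceW}.

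I do not foresee a serious obstacle in this argument: the content is essentially the identity $Q''+3Q^2=g_2$ combined with the sign of $g_2$, and both are routine consequences of the Weierstrass ODE \ref{P} and the reality of the roots $e_i$. The only point meriting care is the strict positivity of $g_2$, which uses the hypothesis $e_3<e_2<e_1$ in an essential way to exclude the degenerate case $e_1=e_2=e_3=0$; once this is noted, the reduction to Theorem \ref{incemod} is immediate.
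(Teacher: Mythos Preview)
Your computation is internally correct: with $Q=-2\wp(\cdot+\omega_3+c)$ one does get $Q''+3Q^2=g_2$, i.e.\ \ref{Q} with $A=0$, $B=-g_2$, and your identity $g_2=2(e_1^2+e_2^2+e_3^2)>0$ is a valid consequence of $e_1+e_2+e_3=0$ together with the distinctness of the roots. So the reduction ``Theorem \ref{incemod} $\Rightarrow$ Theorem \ref{InceW}'' is sound.

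The problem is logical, not computational: within this paper the implication runs the other way. The paper does not supply an independent proof of Theorem \ref{incemod}; the only argument given for it is in Appendix B, immediately after Theorem \ref{InceW}, and that argument explicitly \emph{derives} Theorem \ref{incemod} \emph{from} Theorem \ref{InceW} (``Now we show that Theorem \ref{incemod} \dots\ is no more than a simple consequence of Theorem \ref{InceW}''). Invoking Theorem \ref{incemod} to prove Theorem \ref{InceW} therefore closes a circle. More fundamentally, the paper does not prove Theorem \ref{InceW} at all: it is stated as the classical Ince theorem in Weierstrassian form, with references to Magnus--Winkler (Jacobian form) and Gesztesy--Weikard, and is then used as a black box. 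There is thus no ``paper's own proof'' to compare against; what you have produced is the converse of the implication the paper establishes, showing that Theorems \ref{incemod} and \ref{InceW} are in fact equivalent, but not a self-contained proof of Theorem \ref{InceW}.
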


Now we show that Theorem \ref{incemod} in Section  \ref{Mathieu} is no more than a simple consequence of  Theorem \ref{InceW}, which means that for $n=1$ the necessary and sufficient condition \ref{Q} and the  Ince  theorem are equivalent. This is no longer true for $n>1$, where a Lam\'e--Ince potential satisfies all the KdV equations of order $k\geq n$, but it is well known that such potentials, for $n>1$, don't describe all the solutions of the KdV hierarchy. 

Again we point out    that this is not a new result (see \cite[Th. 7.13]{MW}, where it is presented without proof). 
\vskip .2truecm 
\noindent {\em Proof of Theorem \ref{incemod}. }
Let $Q$ be a periodic not constant solution  of \ref{Q}, then it   also solves the following equation,  
\begin{equation*}
(Q')^2+2Q^3+AQ^3+2BQ=2E,
\end{equation*}
with
$\frac{A^2}{12}-B>0$
and  $E$ such that the roots of the equation
$$ 2Q^3+AQ^2+2BQ-2E=2(Q-Q_1)(Q-Q_2)(Q-Q_3)=0$$
are real distinct numbers.
Operating the following substitution
$$Q=-2P-\frac{1}{6}A,$$
 we obtain that $P$ satisfies \ref{P}. Then  we have  $Q(t)= -\frac{A}{6}-2 \wp(t+\omega_3+c)$, for a suitable $c \in \R$.

Then the Hill equation
 $$ z'' + (\beta +  Q(t)) z=0$$
becomes
 $$ z'' + (\beta -\frac{A}{6}-2 \wp(t+\omega_3+c) )z=0,$$
that satisfies the Ince Theorem for $n=1$, with $\lambda=\beta -\frac{A}{6}$, $\tilde Q=-2 \wp$.

Let us define  $Q_n=\frac{n(n+1)}{2}Q(t)$, with  $Q(t)$ satisfying \ref{Q}. Then
$$Q_n=-\frac{n(n+1)A}{12}-n(n+1) \wp(t+\omega_3+c)$$
satisfies the  hypotheses of  the Ince theorem for every positive integer $n$, bar a translation, absorbed by the eigenvalue  $\lambda$.
\qed

\section*{Acknowledgements}
We wish to thank FILIPPO GAZZOLA for valuable suggestions and comments.



\begin{thebibliography}{9}


\bibitem{AS1} Abramowitz, Milton; Stegun, Irene A.  {\em Handbook of Mathematical Functions with Formulas, Graphs and Mathematical Tables}  Chapters 16, 17, 18, New York: Dover, National bureau of standards, 1964.



\bibitem{AD} B. Anahtarci, P.  Djakov,
{\em Refined asymptotics of the spectral gap for the Mathieu operator},
J. Math. Anal. Appl. {\bf 396} (2012), no. 1, 243--255

\bibitem{AG} G. Arioli, F. Gazzola,
{\em A new mathematical explanation of what triggered the catastrophic torsional mode of the Tacoma Narrows Bridge},
Appl. Math. Model. {\bf 39} (2015), no. 2, 901--912.

\bibitem{A} V.I. Arnol'd,
{\em Remarks on perturbation theory for problems of Mathieu type},
Uspekhi Mat. Nauk {\bf 38} (1983), no. 4 (232), 189--203

\bibitem{Ar} F.M. Arscott, 
Periodic Differential Equations.  An introduction to Mathieu, Lam\'e, and allied functions,  
International Series of Monographs in Pure and Appl. Math., Vol. 66. A Pergamon Press Book The Macmillan Co., New York 1964

\bibitem{AS} J. Avron, B. Simon, {\em The asymptotics of the gap in the Mathieu equation}, Ann. Phys. {\bf 134} (1981), 76--84

 \bibitem{BFG}
V. Benci, D. Fortunato, F. Gazzola, {\em  Existence of torsional solitons in a beam model of suspension bridge},  Arch. Rat. Mech. Anal. {\bf 226}   (2017), 559--585

\bibitem{BG} E. Berchio,  F. Gazzola,
{\em A qualitative explanation of the origin of torsional instability in suspension bridges},
Nonlinear Anal. {\bf 121}  (2015), 54--72


\bibitem{BGZ} E. Berchio,  F. Gazzola, C. Zanini,
{\em Which residual mode captures the energy of the dominating mode in second order Hamiltonian systems? }
SIAM J. Appl. Dyn. Syst. {\bf 15} (2016), no. 1, 338--355.


\bibitem{CAG} A.  Capsoni, R. Ardito, A. Guerrieri, {\em  Stability of dynamic response of suspension bridges}, J. Sound Vib.
{\bf  393}, (2017), 285--307


\bibitem{CW}  T. Cazenave, F.B. Weissler,  {\em Unstable simple modes of the nonlinear string},  Quart.  Appl.  Math.  {\bf 54}  (1996), no. 2.  287--305

\bibitem{Dic} R.W. Dickey, {\em Stability of periodic solutions of the nonlinear string}, Quart. Appl. Math., {\bf 38} (1980),  253--259.

\bibitem{DM} P. Djakov, B. Mityagin, {\em Simple and double eigenvalues of the Hill operator with a two-term potential}, J. Approx. Theory {\bf 135}  (2005), no. 1, 70--104

\bibitem{E} M.S.P. Eastham,
The spectral theory of periodic differential equations.
Texts in Mathematics. Scottish Academic Press, Edinburgh; Hafner Press, New York, 1973

\bibitem{Er} A. Erd\'elyi, {\em \"{U}ber die freien Schwingungen in Kondensatorkreisen mit periodisch ver\"anderlicher  Kapazit\"at},
Ann. Physik {\bf 19} (1934), 585--622


\bibitem{EMOT} A. Erd\'elyi,
W.  Magnus, F. Oberhettinger,  F.G.  Tricomi,   Higher transcendental functions. Vol. III. Based on notes left by Harry Bateman. Reprint of the 1955 original. Robert E. Krieger Publishing Co., Inc., Melbourne, Fla., 1981.

\bibitem{F} A. Falocchi, {\em Torsional instability in a nonlinear isolated model for suspension bridges with fixed cables and extensible hangers }  IMA J. Appl. Math., hxy032 (2018) 



\bibitem{GG}  C. Gasparetto,  F. Gazzola, {\em Resonance tongues for the Hill equation with Duffing coefficients and instabilities in a nonlinear beam equation},  Commun. Contemp. Math. {\bf 20} (2018), no. 1, 1750022, 22

\bibitem{Gaz} F. Gazzola, Mathematical Models for Suspension Bridges: Nonlinear Structural Instability, MS\&A. Modeling, Simulation and Applications, 15. Springer, Cham, 2015


\bibitem{GW} F. Gesztesy,  R. Weikard,  {\em Elliptic algebro-geometric solutions of the KdV and AKNS hierarchies-an analytic approach}, Bull. Amer. Math. Soc.   {\bf 35 } (1998), no. 4, 271--317


\bibitem{gold}  W. Goldberg,  {\em Necessary and sufficient conditions for determining a Hill's equation from its spectrum},  J. Math. Anal. Appl. {\bf 55}  (1976), no. 3, 
549--554

\bibitem{Hale} J.K. Hale,
{\em On the behavior of the solutions of linear periodic differential systems near resonance points},  1960 Contributions to the theory of nonlinear oscillations, Vol. V pp. 55--89 Princeton Univ. Press, Princeton, N.J.


\bibitem{H} H. Hochstadt,
{\em Instability intervals of Hill's equation},
Comm. Pure Appl. Math. {\bf 17} (1964),  251--255

\bibitem{In22} E.L. Ince, {\em A proof of the impossibility of the coexistence of two Mathieu functions}, Proc. Cambridge Philos.
Soc. {\bf 21} (1922) 117--120.

\bibitem{In}  E.L. Ince, {\em Further investigations into the periodic Lam\'e functions}, Proc. Roy. Soc. Edinburgh {\bf 60}, (1940) 83--99

 \bibitem{K} T. Kato,
Perturbation theory for linear operators.
Second edition. Grundlehren der Mathematischen Wissenschaften, Band 132.
Springer-Verlag, Berlin-New York, 1976

\bibitem{Lax} P.D. Lax, {\em  Periodic solutions of the KdV equation}, Comm. Pure Appl. Math. {\bf  28} (1975), 141--188

\bibitem{LK} D.M. Levy,  J.B. Keller,
{\em Instability intervals of Hill's equation}.
Comm. Pure Appl. Math. {\bf 16} (1963), 469--476

\bibitem{MW} W. Magnus,  S. Winkler,
Hill's equation.
Interscience Tracts in Pure and Applied Mathematics, No. 20 Interscience Publishers John Wiley \& Sons, New York-London-Sydney 1966

\bibitem{MP} C. Marchionna, S. Panizzi,
{\em An instability result in the theory of suspension bridges},
Nonlinear Anal. {\bf 140} (2016), 12--28

\bibitem{MP2} C. Marchionna, S. Panizzi,
{\em An instability result in the theory of suspension bridges},  Integral methods in science and engineering. Vol. 1. Theoretical techniques, 193--203, Birkhäuser/Springer, Cham, 2017


\bibitem{MCKW} P.J. McKenna, W. Walter,
{\em Nonlinear oscillations in a suspension bridge},
Arch. Ration. Mech. Anal., {\bf 98} (1987),   167--177

\bibitem{Moo} K.S. Moore,
{\em Large torsional oscillations in a suspension bridge: multiple periodic solutions to a nonlinear wave equation},
SIAM J. Math. Anal. {\bf 33} (2002), no. 6, 1411--1429

\bibitem{Nov} S.P.   Novikov,  {\em A periodic problem for the Korteweg-de Vries equation. I.} (Russian) Funkcional. Anal. i Prilo\v{ž}en. {\bf 8} (1974), no. 3, 54--66

\bibitem{V} F. Verhulst,
Nonlinear differential equations and dynamical systems.
Second edition. Universitext. Springer-Verlag, Berlin, 1996

\bibitem{Vol1} H. Volkmer,
{\em Coexistence of periodic solutions of Ince's equation},   Analysis {\bf  23} (2003), no. 1, 97--105.

\bibitem{Vol} H. Volkmer,
{\em Instability intervals of the Ince and Hill equations},
Analysis {\bf  25} (2005), no. 3, 189--204.

\bibitem{WW} E.T. Wittaker, G.N. Watson, 
 A Course of modern Analysis,  Fourth Edition, reprinted  1935, Reissued  in Cambridge University Press  1996 



\end{thebibliography}
\end{document}